\newcommand*{\rom}[1]{\expandafter\@slowromancap\romannumeral #1@}
\algnewcommand\algorithmicinput{\textbf{INPUT: }}
\algnewcommand\Input{\item[\algorithmicinput]}
\algnewcommand\algorithmicoutput{\textbf{OUTPUT: }}
\algnewcommand\Output{\item[\algorithmicoutput]}
\newcommand\sForAll[2]{ \ForAll{#1}#2\EndFor} 
\newtheorem{assumption}{Assumption}
\newtheorem{lemma}{Lemma}
\newtheorem{theorem}{Theorem}
\providecommand{\keywords}[1]
{
  \small	
  \textbf{\textit{Keywords---}} #1
}
\title{An adaptive Hessian approximated stochastic gradient MCMC method}
\author[1]{Yating Wang\thanks{E-mail: wang4190@purdue.edu}}
\author[1]{Wei Deng\thanks{E-mail: deng106@purdue.edu}}
\author[2]{Guang Lin\thanks{Corresponding Author, Fax: 765 494 0548; Tel: 765 494 1965; E-mail: guanglin@purdue.edu}}
\affil[1]{Department of Mathematics,   Purdue University, West Lafayette, IN 47907, USA}
\affil[2]{Department of Mathematics, School of Mechanical Engineering, Department of Statistics (Courtesy), Department of Earth, Atmospheric, and Planetary Sciences (Courtesy), Purdue University, West Lafayette, IN 47907, USA}
\begin{document}
	
\maketitle 
\begin{abstract}
	Bayesian approaches have been successfully integrated into training deep neural networks. One popular family is stochastic gradient Markov chain Monte Carlo methods (SG-MCMC), which have gained increasing interest due to their scalability to handle large datasets and the ability to avoid overfitting. Although standard SG-MCMC methods have shown great performance in a variety of problems, they may be inefficient when the random variables in the target posterior densities have scale differences or are highly correlated. In this work, we present an adaptive Hessian approximated stochastic gradient MCMC method to incorporate local geometric information while sampling from the posterior. The idea is to apply stochastic approximation to sequentially update a preconditioning matrix at each iteration. The preconditioner possesses second-order information and can guide the random walk of a sampler efficiently. Instead of computing and saving the full Hessian of the log posterior, we use limited memory of the sample and their stochastic gradients to approximate the inverse Hessian-vector multiplication in the updating formula. Moreover, by smoothly optimizing the preconditioning matrix, our proposed algorithm can asymptotically converge to the target distribution with a controllable bias under mild conditions. To reduce the training and testing computational burden, we adopt a magnitude-based weight pruning method to enforce the sparsity of the network. Our method is user-friendly and is scalable to standard SG-MCMC updating rules by implementing an additional preconditioner. The sparse approximation of inverse Hessian alleviates storage and computational complexities for large dimensional models. The bias introduced by stochastic approximation is controllable and can be analyzed theoretically.  Numerical experiments are performed on several problems, including sampling from 2D Gaussian distribution, regression problems, and learning the solutions of elliptic PDE. The numerical results demonstrate great improvement on both the convergence rate and accuracy.
	
\end{abstract}
\keywords{
	Adaptive Bayesian method, deep learning, Hessian approximate stochastic gradient MCMC, stochastic approximation, limited memory BFGS, highly correlated density
}


\section{Introduction}
Deep learning has gained increasing interest in many areas due to its performance when dealing with large scale datasets. One important aspect of their successes in handling large datasets is that they process a small batch of data at each iteration to estimate the gradient of a cost function and update model parameters using gradient descent with a small step size. Bayesian approaches consider uncertainty in model parameters and help to improve the robustness in model learning. MCMC, as one of the most fashionable methods in Bayesian learning, is known for its asymptotic properties. However, it requires computations using the whole dataset, which is not feasible in large scale learning. 

In recent years, many efforts have been made to bring Bayesian methods into the learning of DNNs \cite{ahn2012bayesian, sgld, ding2014bayesian}. One of the most popular approaches is stochastic gradient Langevin dynamics (SGLD) \cite{sgld}. It is a stochastic gradient MCMC algorithm that originates from the discretization of Langevin diffusion. Similar to stochastic gradient descent (SGD), SGLD using mini-batches to approximate the gradients in the loss function. However, it injects a suitable amount of noise when updating parameters so that the sample variance matches the posterior variance. Moreover, with decreasing step sizes, it avoids the Metropolis-Hastings accept-reject step during sampling. It joins the stochastic optimization algorithm which resembles SGD, with Langevin dynamics which injects noise in the parameter updating formula. By injecting the right amount of noise, the method ensures that the trajectory of parameters will converge to the true posterior, rather than the MAP \cite{sgld, vollmer2016exploration, sg-mcmc-convergence}. 

However, due to the complexity of DNN architecture, the model parameters may have complicated posterior density functions \cite{dauphin2014identifying,PSGLD, chen2014stochastic}. When the parameters have different scales in different directions, it may be inefficient if adopting a common step size. It becomes even more sophisticated if the target densities are highly correlated. There have been a lot of methods in the optimization community to overcome these difficulties and accelerate the gradient descent, such as preconditioning and stochastic Newton-type method \cite{dauphin2015equilibrated, zhang2011quasi, byrd2016stochastic, bordes2009sgd}. However, directly applying these methods to SGLD will not produce a correct MCMC scheme \cite{PSGLD, simsekli2016stochastic, ma2015complete} in general. As indicated in \cite{xifara2014langevin, Riemann_LD_HMC, ma2015complete}, from another point of view, one can directly consider a Langevin diffusion on a Riemann manifold which described the geometric structure for the probability model. To ensure the diffusion has an invariant density, one needs to choose drift and volatility according to the Fokker–Planck equation, thus resulting in an additional drift term $\Gamma$. Several attempts have been made starting from the discretization of Riemann Langevin dynamics, to incorporate the underlying geometry according to the metric tensor in the sampling algorithm such that constant step size is adequate along with all directions.  These methods also replace the gradient of a cost function using estimation from mini-batches as in SGLD. For example, stochastic gradient Riemann Langevin dynamics (SGRLD) \cite{SGRLD} incorporates local curvature information by adopting the expected Fisher information as its metric tensor. However, the full second-order Fisher information is intractable to obtain in many applications. 

Preconditioned SGLD (PSGLD) is a computationally efficient method where a diagonal preconditioning matrix is employed as the metric tensor. In \cite{PSGLD}, the authors
adopt a diagonal preconditioner where it is updated sequentially taking into account the current gradient and preconditioning matrix in the previous time step. This type of preconditioner can handle scale differences in the target density but may not be sufficient for highly correlated densities. Moreover, the correction term $\Gamma$ needs computation of third-order derivatives, and ignoring the term in the updating equation will introduce a permanent bias on the MSE \cite{PSGLD}. To tackle these issues, a Hessian approximated stochastic gradient MCMC method (HAMCMC) \cite{simsekli2016stochastic} is studied, and it uses the local Hessian of the negative log posterior as an approximation to the full expected Fisher information. Instead of computing and storing the Hessian matrix, the limited memory BFGS (L-BFGS) algorithm \cite{liu1989limited, byrd2016stochastic} is employed to approximate the product of inverse Hessian and gradient vectors. The idea is to reduce the computation and storage burden while maintaining accuracy. In addition, the current parameter at time step $t$ is updated based on the sample at the previous time step $t-M$, and the approximated Hessian is computed using a history of samples at time steps $\{t-2M+1,\cdots, t-M+1,t-M-1,\cdots,t-1\}$. They claim that the correction term $\Gamma$ vanishes due to this construction. However, when $M$ is large, there will be a large gap between the two samples in the updating formula. Additionally, note that the memory size is $2M-2$ which is larger than the standard memory size $M$.

In this paper, we propose a stochastic Hessian approximated MCMC algorithm with the help of stochastic approximation (SA) to adaptively approximate the preconditioning matrix which involves the Hessian information. SA methods are typically used for root-finding problems or optimization problems in an iterative manner. It was first developed by Robbins and Monro \cite{robbins1951stochastic}, and serves as a typical framework in adaptive algorithms and control of stochastic systems. It naturally fits in our training of a Bayesian model and sequentially updates preconditioning matrices. Compared with HAMCMC, our proposed method (HASGLD-SA) requires fewer samples in the L-BFGS algorithm. We prove that the samples generated from the proposed algorithm weakly converge to the true posterior with a controllable bias introduced by stochastic approximation. The advantages of our proposed algorithm are (1) user-friendly: the implementation is more straightforward, the parameter at time step $t$ is updated based on the sample at the previous time step $t-1$ and there is no gap in the updating formula, (2) scalable: it requires less computation and memories, which is important in applications which require to run a very large-scale computational model, (3) the bias introduced by the algorithm is controllable and can be analyzed theoretically. Moreover, we adopt a magnitude-based weight pruning method to enforce the sparsity of the network, which further reduces the training and testing computational cost.

The plan of the paper is as follows. In Section \ref{sec:prelim}, we review some backgrounds in Langevin dynamics, Riemann Langevin dynamics, and some stochastic gradient MCMC algorithms. In Section \ref{sec:main_method}, our main algorithm is proposed. We first present a detailed online damped L-BFGS algorithm which is used to approximate the inverse Hessian-vector product and discuss the properties of the approximated inverse Hessian. Next, the adaptive Hessian approximated MCMC algorithm with the stochastic approximation to the preconditioning matrix is presented.
Its convergence is discussed in Section \ref{sec:analysis}. Applying the proposed method to a simple 2D Gaussian distribution, a large-p-small-n regression problem, and to solve elliptic problems with varying source terms or heterogeneous coefficients, we demonstrate the numerical examples in Section \ref{sec:numerical} and conclude in Section \ref{sec:conclusion}.

\section{Preliminary} \label{sec:prelim}
First, we present backgrounds on SGLD, preconditioned SGLD, and Hessian approximated SGLD.
\subsection{Langevin Dynamics and SGLD}
 Denote by $\boldsymbol \beta$ the model parameters in DNN. Let $D = \{d_i \}_{i=1}^N$ be the training dataset, where $d_i = (x_i, y_i)$ is an input-output pair. Let $p(\boldsymbol \beta)$ be a prior distribution, and $p(d|\boldsymbol \beta)$ be the likelihood function. The posterior distribution is then $p(\boldsymbol \beta|D) \propto p(\boldsymbol \beta) \prod_{i=1}^{N} p(d_i|\boldsymbol \beta)$. The stochastic differential equation (SDE) which yields an invariant distribution $p(\boldsymbol \beta|D)$ 
 \begin{equation}\label{eq:LD}
 d {\boldsymbol \beta}_t = \nabla_{\boldsymbol \beta} {L} (\boldsymbol {\beta}_{k} ) d t + \sqrt{2} d W_t
 \end{equation}
 where $W_t$ is a Brownian motion and
 \begin{equation*}
 \nabla_{\boldsymbol \beta} {L} (\boldsymbol {\beta}_{k} ) = \nabla_{\boldsymbol \beta} \log p(\boldsymbol \beta) +  \sum_{i=1}^N \nabla_{\boldsymbol \beta} \log p(d_{i}|\boldsymbol \beta)
 \end{equation*} 
 
 The likelihood for regression problem can be rewritten as
 \begin{equation*}
 p (d_k| \boldsymbol \beta, \sigma^2) = \frac{1}{(2\pi \sigma^2)^{n/2}} \exp \big\{-\frac{  \sum \limits_{x^k_i \in d_k} (x^k_i -\mathcal{F}(x^k_i; \boldsymbol \beta)  )^2 }{ 2 \sigma^2 } \big\}
 \end{equation*}
 where $\mathcal{F}$ denotes a model describing the input-output map between $x_i^k$ and $y_i^k$.
 
SGLD is a posterior Bayesian sampling method originates from the discretization of the SDE \eqref{eq:LD} and combines the idea from stochastic gradient algorithms. The loss gradient can be approximated efficiently using mini-batches, and the uncertainty in the model parameter can be captured in Bayesian learning. It avoids the MH correction by adopting small learning rate. The model parameters update as follows:
\begin{equation*}
{\boldsymbol \beta}_{k+1} = {\boldsymbol \beta}_{k} + \epsilon_k \nabla_{\boldsymbol \beta} \tilde{L} ({\boldsymbol \beta}_{k} ) + \mathcal{N}(0, 2\epsilon_k \tau^{-1})
\end{equation*}
where $\epsilon_k$ is the learning rate and 
\begin{equation*}
\nabla_{\boldsymbol \beta} \tilde{L} (\boldsymbol {\beta}_{k} ) = \nabla_{\boldsymbol \beta} \log p(\boldsymbol \beta) + \frac{N}{n} \sum_{i=1}^n \nabla_{\boldsymbol \beta} \log p(d_{ki}|\boldsymbol \beta)
\end{equation*} 
is the stochastic gradient computed from a mini-batch $d_k = \{d_{k1}, \cdots, d_{kn} \}$.

\subsection{Reimann Langevin Dynamics and PSGLD, HASGLD}
Stochastic Gradient Riemann Langevin Dynamics (SGRLD) \cite{SGRLD} is a generalization of SGLD on a Riemannian manifold. If the components of the model parameter $\boldsymbol \beta$ possess different scales or are highly correlated, the invariant probability distribution for the Langevin equation is not isotropic, using standard Euclidian distance may lead to slow mixing. Given with some metric tensor $G^{-1}(\boldsymbol \beta)$, the SDE defining the Langevin diffusion with stationary distribution $p(\boldsymbol \beta|D)$ on a Riemann manifold is
 \begin{equation}\label{eq:RLD}
d {\boldsymbol \beta}_t = \left[ G(\boldsymbol \beta) \nabla_{\boldsymbol \beta} {L} (\boldsymbol {\beta} ) + \Gamma(\boldsymbol \beta) \right] d t + \sqrt{2G(\boldsymbol \beta) } d W_t
\end{equation}
where $\Gamma_i({\boldsymbol \beta}) = \sum_j \frac{\partial G_{ij}(\boldsymbol \beta)  }{\partial \beta_j}$. We note that $\Gamma({\boldsymbol \beta})$ corresponds to variations in local curvature on the manifold and is equal to zero for a constant curvature. It is shown that, the invariant distribution of the dynamics \eqref{eq:RLD} is $p^s(\beta) \propto \exp{{L} (\boldsymbol {\beta} )}$, and it is unique if $G^{-1}(\boldsymbol \beta)$ is positive definite \cite{ma2015complete}. 

In this case, the parameter updates can be guided using the geometric information of this manifold:
\begin{equation} \label{eq:sgrld}
{\boldsymbol \beta}_{k+1} = {\boldsymbol \beta}_{k} + \epsilon_k  \left[ G(\boldsymbol \beta_k ) \nabla_{\boldsymbol \beta} \tilde{L} ({\boldsymbol \beta}_{k} ) + \Gamma({\boldsymbol \beta}_k)  \right] +  \sqrt{ 2\epsilon_k \tau^{-1}G(\boldsymbol \beta_k )  }z_k
\end{equation}
where $z_k \sim \mathcal{N}(0, I)$.

A natural choice for metric tensor is the expected Fisher information matrix, however, it is intractable in many cases. In \cite{PSGLD}, the authors introduce a diagonal preconditioner, which resembles the preconditioning matrix in RMsProp, to reduce computational cost. However, it is effective to handle the case when there are scale differences among model parameters, but may not be sufficient to deal with strongly correlated target densities.
A Hessian-approximated MCMC \cite{simsekli2016stochastic} method (HAMCMC) was proposed to overcome this issue. The idea is to compute the local curvature of the target density by approximating local Hessian information via quasi-newton approaches. In particular, HAMCMC generates samples $\boldsymbol \beta_k$ based on $\boldsymbol \beta_{k-M}$, where $M \geq 2$, and uses a history of samples $\{\boldsymbol \beta_{k-2M+1}, \cdots,  \boldsymbol \beta_{k-M-1}, \boldsymbol \beta_{k-M+1}, \cdots, \boldsymbol \beta_{k-1} \}$ to approximate inverse Hessian information via limited BFGS. By this construction, the authors claim that the approximated Hessian is independent of the base-line sample $\boldsymbol \beta_{k-M}$, thus the correction term $\Gamma({\boldsymbol \beta}_k)$ can be ignored without introducing additional bias. However, if the memory size $M$ is large, there will be a large gap between two neighboring samples in the update rule. This may require a larger regularizer to ensure positive definite L-BFGS approximations, which result in a preconditioning matrix close to the identity matrix.

In this work, we adopt the stochastic approximation (SA) idea to iteratively update the approximated inverse Hessian. In each step, we sample $\boldsymbol \beta_k$ based on $\boldsymbol \beta_{k-1}$, and approximate $G(\boldsymbol \beta_k)$ using history samples $\{\boldsymbol \beta_{k-M+1}, \cdots,  \boldsymbol \beta_{k-1} \}$. Compared with HAMCMC, our proposed method (HAMCMC-SA) requires fewer samples in the memory.

\section{Main Method} \label{sec:main_method}

\subsection{The online damped L-BFGS algorithm } 

Now, we describe the online damped L-BFGS algorithm to approximate the local inverse Hessian at each iteration. In this approach, the approximated inverse Hessian matrix does not need to be computed or stored explicitly, but an approximation to the matrix-vector product is updated using successive gradient vectors instead.  

Suppose we have a history of samples $\{\boldsymbol \beta_{k-M+1}, \cdots,  \boldsymbol \beta_{k-1} \}$, where $M$ is the memory size. Let $s_k = \boldsymbol \beta_{k+1} - \boldsymbol \beta_k$ be the increment in samples, and $y_k = \nabla_{\boldsymbol \beta} \tilde{L} ({\boldsymbol \beta}_{k+1}, d_k ) -  \nabla_{\boldsymbol \beta} \tilde{L} ({\boldsymbol \beta}_k,d_k)$ be the differences between sample gradients.  We remark that, here the stochastic gradients $\nabla_{\boldsymbol \beta} \tilde{L} ({\boldsymbol \beta}_{k+1}, d_k)$ and $\nabla_{\boldsymbol \beta}\tilde{L} ({\boldsymbol \beta}_k,d_k)$ are evaluated with respect to the same set of samples $d_k$, which refers to the online L-BFGS \cite{mokhtari2015global}. This will avoid additional differences between noisy gradient estimates and will only be applied for determining the stochastic gradient variation.

Another thing to mention is that, given an initial guess of Hessian approximation which is positive definite, the curvature condition $s_k^T y_k >0$ needs to be satisfied such that after $p$ recursion steps the Hessian approximation is still positive definite. The following techniques will be adopted, the curvature condition is guaranteed by Lemma \ref{lemma:curvature}.

\begin{equation}\label{eq:ybar}
\bar{y}_k = \theta_k y_k +   (1-\theta_k)s_k
\end{equation} 
where 
\begin{equation*}
\theta_k = \begin{cases}
\displaystyle{ \frac{(1-r ) s_k^T s_k}{s_k^T B_{k,0} s_k - s_k^T B_{k,0}y_k} }, & \text{if} \displaystyle{  \ s_k^T y_k < r s_k^T B_{k,0} s_k} \\
1, & \text{otherwise}
\end{cases} 
\end{equation*} 
where $0<r<1$ is a constant, $B_{k,0}$ is the initial guess of the Hessian at $k$-th step.

The online damped L-BFGS approximation of Hessian employs the update formula:
\begin{equation}\label{eq:B_update}
B_{k,i+1} = B_{k,i} + \frac{\bar{y}_j \bar{y}_j^T }{\bar{y}_j^T s_j} - \frac{ B_{k,i} s_j s_j^T B_{k,i} }{ s_j^T B_{k,i}s_j } 
\end{equation}
where $j = k-M+i$, $M$ denotes the memory size. The initial guess of the recursion is typically chosen to be $B_{k,0} = \gamma_k I$, where $ \displaystyle{ \gamma_k = \text{max}\{\frac{\bar{y}_k \bar{y}_k^T}{s_k \bar{y}_k^T}, \delta\} }$. Denote by $B_k$ be the final approximation of the Hessian, and $\tilde{G}_k = B_k^{-1}$. After $M$ recursions, we take $B_{k}=B_{k,M}$.

For the inverse Hessian, we have
\begin{equation}\label{eq:G_update}
\tilde{G}_{k,i+1} = (I - \frac{s_j \bar{y}_j ^T}{ \bar{y}_j ^T s_j}) \tilde{G}_{k,i}   (I - \frac{s_j \bar{y}_j ^T}{ \bar{y}_j^T s_j})^T + \frac{s_j s_j^T }{\bar{y}_j^T s_j} 
\end{equation}
The initial guess of the recursion is $\tilde{G}_{k,0} = \gamma_k^{-1} I$.

As for the $\sqrt{ \tilde{G}_k }:= S_k$ or $\tilde{G}_k =  S_k S_k^T$,
\begin{align*}
&S_{k,i+1} = (I - p_j q_j^T) S_{k,i}\\
& p_j = \frac{s_j}{s_j^T \bar{y}_j}, \;\;\;\;\;\;\; q_j = \sqrt{\frac{s_j^T \bar{y}_j}{s_j^T B_{k,i} s_j}}B_{k,i} s_j - \bar{y}_j
\end{align*}

For brevity, denote by $g_k = \nabla_{\boldsymbol \beta} \tilde{L} ({\boldsymbol \beta} )$. The online damped L-BFGS algorithm to compute $G_k g_k$ and $\sqrt{G_k }z_k$ use a two-loop recursion and is described in Algorithm \ref{alg:l-bfgs}.
\begin{algorithm} [!htb]
	\caption{L-BFGS}\label{alg:l-bfgs}
	\begin{algorithmic}[1] 
		\Input{Initialize $g_k$, $z_k$,  $M$, $B_{k,0} = \gamma_k I$, $S_{k,0}  = 1/\sqrt{\gamma_k} I$, $\tilde{G}_{k,0} = \gamma_k^{-1} I$ }
		\Output {$\xi = \tilde{G}_{k,M} g_k$, $\eta = S_{k,M} z_k$}
		\State $q \gets g_k$
		\sForAll{ $i \gets k: \text{min}\{k-M+1, 0\}$} {
			\State $\displaystyle{ \alpha_i \gets \frac{s_i^T q}{\bar{y}_i^T s_i} }$
			\State $q \gets q - \alpha_i \bar{y}_i$}

		\State $a_1 \gets B_0 s_{k-M+1}$, $T_{1,j} = B_0 s_{k-M+j}, \,\, j = 1, \cdots, M$
		
		\sForAll{ $i \gets 2: k$} {
			\sForAll{ $j \gets i : k$}{
				\State $\displaystyle{T_{i,j} \gets T_{i-1,j} + \frac{\bar{y}_{i-1}^T s_j}{s_{i-1}^T \bar{y}_{i-1}} y_{i-1} - \frac{a_{i-1}^T s_j}{s_{i-1}^T a_{i-1}} a_{i-1} }$	}
			\State $a_i \gets T_{i,i}$}
		
		\State $\xi \gets  \tilde{G}_{k,0} q$
		\State $\eta \gets S_{k,0} z_k$
		\sForAll{ $i \gets \text{min}\{k-M+1, 0\} : k$} {
			\State $\displaystyle{\beta_i \gets \frac{\bar{y}_i^T p}{\bar{y}_i^T s_i}}$
			\State $\xi \gets \xi + (\alpha_i -\beta_i) s_i$
			\State  $\displaystyle{ \eta \gets \eta - \frac{a_i^T \eta}{\sqrt{s_i^T \bar{y}_i} \sqrt{a_i^T s_i} } s_i - \frac{\bar{y}_i^T \eta }{s_i^T \bar{y}_i} s_i}$}	
	\end{algorithmic}
\end{algorithm}

 \begin{lemma}\label{lemma:curvature}
Let $\bar{y}_j$ be defined in \eqref{eq:ybar}, if $B_{k,i}$ and $\tilde{G}_{k,i}$ are positive definite, then $B_{k,i+1}$ and $\tilde{G}_{k,i+1}$ generated by \eqref{eq:B_update} and \eqref{eq:G_update} are both positive definite. 
\end{lemma}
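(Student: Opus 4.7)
The plan is to reduce both positive-definiteness claims to a single curvature-type inequality $\bar y_j^T s_j > 0$, and then invoke the standard rank-two BFGS decomposition. First, I would verify the curvature condition directly from the damping definition in \eqref{eq:ybar}. When $\theta_j = 1$ we have $\bar y_j = y_j$ and $\bar y_j^T s_j \geq r\, s_j^T B_{k,0} s_j > 0$ since $B_{k,0}=\gamma_k I$ with $\gamma_k>0$. In the damped branch, $\theta_j$ is chosen precisely so that, after substituting into $\bar y_j^T s_j$, the cross terms cancel and a short algebraic manipulation gives $\bar y_j^T s_j = r\, s_j^T B_{k,0} s_j > 0$; this is the algebraic content of Powell's damping trick.

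With the curvature condition in hand, I would establish positive definiteness of $B_{k,i+1}$ by a direct quadratic-form argument. For any nonzero $v$, \eqref{eq:B_update} gives
\[
v^T B_{k,i+1} v \;=\; \Big[v^T B_{k,i} v - \frac{(v^T B_{k,i} s_j)^2}{s_j^T B_{k,i} s_j}\Big] + \frac{(v^T \bar y_j)^2}{\bar y_j^T s_j}.
\]
The bracketed term is nonnegative by Cauchy--Schwarz with respect to the $B_{k,i}$-inner product (legitimate because $B_{k,i}\succ 0$ by hypothesis), and it vanishes if and only if $v$ is a scalar multiple of $s_j$. In the generic case that $v$ is not a multiple of $s_j$, the bracket is strictly positive and the second term is $\geq 0$. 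In the degenerate case $v=\lambda s_j$, the bracket vanishes but the second term reduces to $\lambda^2 \bar y_j^T s_j > 0$. Either way $v^T B_{k,i+1} v > 0$.

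The argument for $\tilde G_{k,i+1}$ is completely analogous. Setting $P = I - \frac{s_j \bar y_j^T}{\bar y_j^T s_j}$, the update \eqref{eq:G_update} reads $\tilde G_{k,i+1} = P\tilde G_{k,i} P^T + \frac{s_j s_j^T}{\bar y_j^T s_j}$, and hence
\[
v^T \tilde G_{k,i+1} v \;=\; (P^T v)^T \tilde G_{k,i} (P^T v) + \frac{(v^T s_j)^2}{\bar y_j^T s_j}.
\]
Both summands are nonnegative; the first vanishes iff $P^T v = 0$, equivalently iff $v$ is a scalar multiple of $\bar y_j$, and in that case the second summand equals $\lambda^2 \bar y_j^T s_j > 0$ exactly because the curvature condition holds. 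Strict positivity follows in every case. The main obstacle is the first step, the algebraic verification in the damped branch of \eqref{eq:ybar}: the formula for $\theta_j$ is written in terms of the initial guess $B_{k,0}$ rather than the current $B_{k,i}$ appearing in the recursion, so care is needed to distinguish the two and to reconcile the paper's convention with Powell's classical form of the damping correction. Once the curvature inequality is pinned down, the positive-definiteness conclusions are the classical one-line Cauchy--Schwarz calculations above.
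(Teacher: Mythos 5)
Your proposal is correct and follows essentially the same route as the paper: establish the curvature condition $\bar y_j^T s_j>0$ from the damping rule, then verify positive definiteness of the rank-two updates by a direct quadratic-form computation. You are in fact somewhat more careful than the paper, which only writes out the quadratic form for $\tilde G_{k,i+1}$ (deducing $B_{k,i+1}\succ 0$ from it) and does not separate out the degenerate cases $v\parallel s_j$ and $P^Tv=0$ that you treat explicitly; your observation about the mismatch between \eqref{eq:ybar} as written and the $s_j^T\bar y_j = r\,s_j^T B_{k,0}s_j$ identity used in the paper's proof is also a real issue with the paper's stated damping formula.
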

\begin{proof}
	By \eqref{eq:ybar}, we can easily obtain
	\begin{equation*}
	s_j^T \bar{y}_j = 	\begin{cases}
	\displaystyle{ r s_j^T B_{k,0} s_j }, & \text{if } \displaystyle{  s_j^T y_j < r s_j^T B_{k,0} s_j} \\
	s_j^T y_j, & \text{otherwise}
	\end{cases} 
	\end{equation*}
	
	 Thus,  $s_j^T \bar{y}_j \geq  \lambda s_j^T B_{k,0} s_j >0$ since $B_{k,i}$ is positive definite. By positive definiteness of $\tilde{G}_{k,i}$, for $\textbf{x}$, we have
	 \begin{equation*}
	  \textbf{x}^T\tilde{G}_{k,i} \textbf{x} > 0
	 \end{equation*}
	 Then it's easy to see that $s_j^T \bar{y}_j>0$, and
	  \begin{align*}
	 \textbf{x}^T \tilde{G}_{k,i+1}\textbf{x} &=  \textbf{x}^T (I - \frac{s_j \bar{y}_j ^T}{ \bar{y}_j ^T s_j}) \tilde{G}_{k,i}   (I - \frac{s_j \bar{y}_j ^T}{ \bar{y}_j^T s_j})^T  \textbf{x}+ \frac{1}{\bar{y}_j^T s_j}  \textbf{x}^Ts_j s_j^T  \textbf{x}\\
	 &= \textbf{z}^T  \tilde{G}_{k,i} \textbf{z} +  \frac{1}{\bar{y}_j^T s_j} || s_j^T  \textbf{x}||^2 > 0
	 \end{align*}
	 where $ \displaystyle{\textbf{z} = (I - \frac{s_j \bar{y}_j ^T}{ \bar{y}_j^T s_j})^T}\textbf{x}$.
	 Thus, $\tilde{G}_{k,i+1}$ is positive definite, so is $B_{k,i+1}$.
	 
\end{proof}

 \begin{assumption}
 The eigenvalues of the Hessian $H_k = \nabla^2 \tilde{L}(\beta_k)$ are bounded between constants $0<a$ and $A<\infty$, i.e,
 \begin{equation*}
 a \preceq H_k \preceq A
 \end{equation*}
 \end{assumption}

\begin{lemma}  \label{lemma:eigen_bd}
	The eigenvalues of Hessian approximation $B_k$ generated from iteration \eqref{eq:B_update} with $B_{k,0} = \gamma_k I$ are uniformly bounded,
	 \begin{equation*}
	\tilde{a} \preceq B_k \preceq \tilde{A}
	\end{equation*}
	 
\end{lemma}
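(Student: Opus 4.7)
The plan is to prove uniform eigenvalue bounds on $B_k$ via the classical trace-determinant argument for damped L-BFGS, applied inductively across the $M$ rank-two recursions producing $B_k = B_{k,M}$ from $B_{k,0} = \gamma_k I$. I would first control the curvature pairs $(s_j, \bar{y}_j)$ uniformly. The assumption $a \preceq H_k \preceq A$ together with the mean value theorem applied to $\nabla\tilde{L}(\cdot, d_k)$ yields $a\|s_j\|^2 \le y_j^T s_j \le A\|s_j\|^2$ and $\|y_j\| \le A\|s_j\|$. The damping rule \eqref{eq:ybar} combined with Lemma \ref{lemma:curvature} then gives $\bar{y}_j^T s_j \ge r\,\gamma_k\|s_j\|^2$ and, since $\theta_j \in [0,1]$, $\|\bar{y}_j\| \le C\|s_j\|$ for a constant $C$ depending only on $A$. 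A short self-consistency check on $\gamma_k = \max(\|\bar{y}_k\|^2/(s_k^T\bar{y}_k),\delta)$ shows that $\gamma_k$ is bounded two-sidedly by constants depending only on $A, r, \delta$; in particular $\|\bar{y}_j\|^2/(\bar{y}_j^T s_j) \le C_1$ uniformly in $k$ and $j$.

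For the upper bound, I would take the trace of the BFGS update \eqref{eq:B_update}. Because the last term is positive semidefinite,
\begin{equation*}
\mathrm{tr}(B_{k,i+1}) \le \mathrm{tr}(B_{k,i}) + \frac{\|\bar{y}_j\|^2}{\bar{y}_j^T s_j} \le \mathrm{tr}(B_{k,i}) + C_1.
\end{equation*}
Iterating $M$ times from $\mathrm{tr}(B_{k,0}) \le nC_2$ (where $n$ is the parameter dimension) bounds $\mathrm{tr}(B_k)$, hence every eigenvalue, by some constant $\tilde{A}$ depending only on $n, M, a, A, r, \delta$.

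For the lower bound, I would invoke the standard BFGS determinant identity
\begin{equation*}
\det(B_{k,i+1}) = \det(B_{k,i}) \cdot \frac{\bar{y}_j^T s_j}{s_j^T B_{k,i} s_j}.
\end{equation*}
Inserting $\bar{y}_j^T s_j \ge r\delta\|s_j\|^2$ and the just-established $s_j^T B_{k,i} s_j \le \tilde{A}\|s_j\|^2$ gives a uniform ratio at least $r\delta/\tilde{A}$. After $M$ recursions starting from $\det(B_{k,0}) \ge \delta^n$, this yields $\det(B_k) \ge \delta^n(r\delta/\tilde{A})^M$. Combining this with positive definiteness (Lemma \ref{lemma:curvature}) and the upper bound $\tilde{A}$ on the remaining $n-1$ eigenvalues forces the smallest eigenvalue to satisfy $\lambda_{\min}(B_k) \ge \tilde{a} := \delta^n(r\delta/\tilde{A})^M / \tilde{A}^{n-1} > 0$.

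The main obstacle is keeping every constant genuinely uniform in $k$ and in the recursion index $i$. The dependencies are somewhat circular at first glance --- the determinant step uses the trace bound $\tilde{A}$, and $\gamma_k$ is defined in terms of $\bar{y}_k$ which itself depends on $B_{k,0} = \gamma_k I$ --- so the argument must be ordered carefully, with every constant anchored to the a priori data $a, A, r, \delta, n, M$. Once these dependencies are untangled, each rank-two update is handled routinely, closely mirroring the Byrd--Nocedal L-BFGS stability analysis.
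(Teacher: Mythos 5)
Your proposal is correct and follows essentially the same route as the paper's proof: a trace bound on the rank-two update (using the mean-Hessian bound on $\bar{y}_j^T\bar{y}_j/(\bar{y}_j^Ts_j)$) for the upper eigenvalue bound, followed by the BFGS determinant identity and the ratio $s_j^T\bar{y}_j/(s_j^TB_{k,i}s_j)\ge r\gamma_k/\tilde{A}$ for the lower bound, dividing the determinant by $\tilde{A}^{d-1}$ to isolate the smallest eigenvalue. The only differences are cosmetic choices of constants.
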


\begin{proof}
	Take trace of the matrix in both hands of the equation \eqref{eq:B_update}, we have
	\begin{equation} \label{eq:trace1}
	\text{tr}( B_{k,i+1} ) = 	\text{tr}(B_{k,i}) + \frac{1}{\bar{y}_j^T s_j} \text{tr}(\bar{y}_j \bar{y}_j^T)- \frac{1}{ s_j^T B_{k,i}s_j }\text{tr}( B_{k,i} s_j s_j^T B_{k,i} )
	\end{equation}
	By the properties of trace of a matrix, the above equation can be simplified as
	\begin{align*}
	\text{tr}( B_{k,i+1} ) = 	\text{tr}(B_{k,i}) + \frac{\bar{y}_j^T \bar{y}_j}{\bar{y}_j^T s_j} - \frac{||B_{k,i} s_j||^2 }{ s_j^T B_{k,i}s_j } \leq \text{tr}(B_{k,i}) + \frac{\bar{y}_j^T \bar{y}_j}{\bar{y}_j^T s_j} 
	\end{align*}
	since $\displaystyle{ \frac{||B_{k,i} s_j||^2 }{ s_j^T B_{k,i}s_j } > 0}$ by the positive definiteness of $B_{k,i}$.
	
	Now we derive a bound for $\displaystyle{ \frac{\bar{y}_j^T \bar{y}_j}{\bar{y}_j^T s_j} }$. Since $B_{k,0} = \gamma_k I$, 
		\begin{equation}\label{eq:trace2}
	\frac{\bar{y}_j^T \bar{y}_j}{\bar{y}_j^T s_j} = \frac{ ||\theta_j y_j + (1-\theta_j)B_{k,0} s_j||^2 }{r  s_j^TB_{k,0} s_j}  = \frac{1}{r} \left( \frac{\theta_j^2 ||y_j||^2 }{ \gamma_k ||s_j||^2}  + (1-\theta_j)^2 \gamma_k + \frac{2\theta_j(1-\theta_j)  y_j^T s_j }{ ||s_j||^2} \right)
	\end{equation}
	Denote by $\bar{H} = \int_{0}^{1} H(\beta_k + \tau (\beta_{k+1}-\beta_k)) d\tau$ the mean of Hessian in the segment $[\beta_k, \beta_{k+1} ]$, $ a \preceq \bar{H} \preceq A$. 
	Due to the fact
	\begin{equation} \label{eq:calc1}
	\frac{\partial \nabla \tilde{L} (\beta_k + \tau (\beta_{k+1}-\beta_k)) } {\partial \tau} = (\beta_{k+1}-\beta_k) H(\beta_k + \tau (\beta_{k+1}-\beta_k)), 
	\end{equation}
	we have 
		\begin{equation}\label{eq:calc2}
	\int_{0}^{1} (\beta_{k+1}-\beta_k) H(\beta_k + \tau (\beta_{k+1}-\beta_k))  d\tau  =\nabla  \tilde{L} (\beta_{k+1}) -\nabla \tilde{L} (\beta_{k}),
	\end{equation}
	by integrating \eqref{eq:calc1} over $[0,1]$ in both sides of the equation. 
	One can easily see from \eqref{eq:calc2} that $\bar{H} s_k = y_k$. Thus, the first term and third term in \eqref{eq:trace2} can be bounded as follows
			\begin{align*}
	\frac{\theta_j^2 ||y_j||^2 }{\gamma_k||s_j||^2} & \leq 	\frac{ ||\bar{H} s_j ||^2 }{\gamma_k||s_j||^2} \leq \frac{A^2}{\delta} \\
	\frac{2\theta_j(1-\theta_j)  y_j^T s_j }{ ||s_j||^2}  & \leq 2	\frac{ s_j^T \bar{H} s_j  }{||s_j||^2} \leq 2 A
	\end{align*}
	since $0<\theta_j<1$, and $ \delta \leq \gamma_k \leq \delta + A$.
	Plug these estimates in \eqref{eq:trace2}, we get
		\begin{equation*}
	\frac{\bar{y}_j^T \bar{y}_j}{\bar{y}_j^T s_j}   \leq \frac{1}{r} \left( \frac{A^2}{\delta} + (\delta + A) + 2 A \right) 
	\end{equation*}
	Then \eqref{eq:trace1} can be bounded as
		\begin{align*}
	\text{tr}( B_{k,i+1} ) & \leq	\text{tr}(B_{k,i}) + \frac{1}{r} \left( \frac{A^2}{\delta} + (\delta + A) + 2 A \right) \\
	&\leq 	\text{tr}(B_{k,0}) + \frac{M}{r} \left( \frac{A^2}{\delta} + (\delta + A) + 2 A \right) \\
	&\leq 	d (\delta + A) + \frac{M}{r} \left( \frac{A^2}{\delta} + (\delta + A) + 2 A \right)   
	\end{align*}
	where $d$ is the size of matrix $B_{k,0}$, $M$ is the number of recursions in BFGS update.
	
	Since $B_{k,i+1}$ is positive definite, and $\text{tr}( B_{k,i+1} )$ is the sum of all eigenvalues of $B_{k,i+1}$, the largest eigenvalue $\mu_{\text{max}}$ of $B_{k,i+1}$ satisfies
	\begin{equation*}
	\mu_{\text{max}} \leq d (\delta + A) + \frac{M}{\lambda} \left( \frac{A^2}{\delta} + (\delta + A) + 2 A \right) := \tilde{A} 
	\end{equation*}

	Thus the largest eigenvalue of  $B_{k,i+1}$ is no greater than $\tilde{A}$.
	
	On the other hand, 
	\begin{equation} \label{eq:det}
	\text{det} ( B_{k,i+1}) = \text{det} ( B_{k,i})  \text{det}  \left( I +   \frac{  B_{k,i}^{-1} \bar{y}_j \bar{y}_j^T }{\bar{y}_j^T s_j} - \frac{ s_j ( B_{k,i} s_j)^T}{ s_j^T B_{k,i}s_j }     \right) 
	 \end{equation}
	 The second term in the right hand side of \eqref{eq:det} is equivalent to
	 \begin{equation*}
	 \text{det}  \left( I +   \frac{  B_{k,i}^{-1} \bar{y}_j \bar{y}_j^T }{\bar{y}_j^T s_j} - \frac{ s_j ( B_{k,i} s_j)^T}{ s_j^T B_{k,i}s_j }     \right)  = (1+u_1^T u_2)(1+u_3^T u_4) - (u_1^T u_4)(u_2^T u_3)
	 	\end{equation*}
	 	where $\displaystyle{ u_1 = -s_j, \;\; u_2 = \frac{ B_{k,i}s_j}{s_j^T B_{k,i}s_j }}, \;\; u_3 = B_{k,i}^{-1} \bar{y}_j,  \;\; u_4=  \frac{\bar{y}_j }{\bar{y}_j^T s_j}$.
	 	
	 	It is easy to check that $u_1^T u_2 = -1$, $u_1^T u_4 = -1$, $u_2^T u_3 = \frac{ s_j^T \bar{y}_j}{s_j^T B_{k,i}s_j }$, thus \eqref{eq:det} implies
	 		\begin{equation*} 
	 	\text{det} ( B_{k,i+1}) = \text{det} ( B_{k,i})  \frac{ s_j^T \bar{y}_j}{s_j^T B_{k,i}s_j } \geq   \text{det} ( B_{k,i})  \frac{ r \gamma_k ||s_j||^2  }{\tilde{A} ||s_j||^2 } =  \text{det} ( B_{k,i})  \frac{ r \gamma_k}{\tilde{A} } 
	 	\end{equation*}
	 since $s_j^T \bar{y}_j \geq  r s_j^T B_{k,0} s_j \geq r \gamma_k ||s_j||^2$ and $s_j^T B_{k,i}s_j \leq A ||s_j||^2$.
	 
	 	By induction and using the fact that $\text{det} ( B_{k,0}) = \gamma_k^d$, we have
	 	\begin{equation*} 
	 	\text{det} ( B_{k,i+1}) \geq  \text{det} ( B_{k,0}) \left( \frac{ r \gamma_k}{\tilde{A} } \right) ^M
\geq \left( \frac{ r }{\tilde{A} } \right) ^M\gamma_k^{d+M} \geq \left( \frac{ r }{ \tilde{A} } \right) ^M \delta^{d+M} 
	\end{equation*}
	 Since any eigenvalue of $B_{k,i+1}$ is no greater than $\tilde{A}$, and $	\text{det} ( B_{k,i+1})$ is equal to the product of all eigenvalues, we have that for any specific eigenvalue $\mu_j$ of $B_{k,i+1}$ 
		 	\begin{equation*} 
	\mu_j \geq  \frac{1}{\tilde{A}^{d-1}} \left( \frac{ r }{\tilde{A} } \right) ^M \delta^{d+M} 	:= \tilde{a}
	\end{equation*}	
Thus, we have
 \begin{equation*}
\tilde{a} \preceq B_k \preceq \tilde{A}.
\end{equation*}
	Furthermore,
	 \begin{equation*}
 \frac{1}{\tilde{A}} \preceq \tilde{G}_k \preceq \frac{1}{\tilde{a}}.
	\end{equation*}
	
\end{proof}

\subsection{Adaptive Hessian-approximated SG-MCMC with iterative pruning}
The adaptive Hessian-approximated stochastic gradient MCMC with iterative pruning is a mixture of optimization and sample algorithm, where the model parameters are sampled from \eqref{eq:sgrld}, and the preconditioning matrix $G(\boldsymbol \beta)$ is optimized iteratively.

The idea is to obtain the optimal $G_*$ based on the asymptotically correct distribution $\pi(\boldsymbol{\beta})$ through stochastic approximation. We aim to get an estimate $G_*$ which solves the fixed point equation $\displaystyle{ \int g_{G}(\boldsymbol{\beta}) \pi(\boldsymbol{\beta}) d \boldsymbol{\beta} = G_*}$, where $g_{G}(\cdot)$ denotes some mapping to derive the optimal $G$ given current $\boldsymbol{\beta}$.

Define the random output $H(\boldsymbol{\beta}, G) = g_{G}(\boldsymbol{\beta})   - G$ and its mean field function $h(G) = \mathbb{E} [H(\boldsymbol{\beta}, G)]$.  In our approach, we approximate $g_{G}(\boldsymbol{\beta})$ using the damped online L-BFGS as described in Algorithm \ref{alg:l-bfgs}. This will result a bias $\delta(M,n,\epsilon_k)$ at each step which includes the error introduced by using stochastic gradients, and the error introduced by using a limited memory instead of full memory. Here $M$ is the memory size, $n$ is the number of samples in a mini-batch. That is, we use 
\begin{equation}\label{eq:H_tilde}
\tilde{H}(\boldsymbol{\beta}, G) =H(\boldsymbol{\beta}, G)  + \delta(M,n,\epsilon_k),
\end{equation}
where we assume $ \mathbb{E}|| \delta(M,n,\epsilon_k)||^2 \leq C_0^2$.

After sampling $\boldsymbol{\beta}_{k+1}$ using  \eqref{eq:sgrld} with approximated preconditioning matrix $G_k$, one can then update $G_{k+1}$ from the following recursion:
\begin{equation}\label{eq:G_sa}
G_{k+1} = G_k + \omega_{k+1} \tilde{H}(\boldsymbol{\beta}_{k+1}, G_k).
\end{equation}

In summary, the adaptive empirical Bayesian algorithm samples $\boldsymbol \beta$ and optimize $G(\boldsymbol{\beta})$ as in Algorithm \ref{alg:hSGLD-SA}.
\begin{algorithm} [!htb]
	\caption{AHAMCMC-SA}\label{alg:hSGLD-SA}
	\begin{algorithmic}[1] 
		\Input{Initialize $\boldsymbol{\beta}_1$, $M$, $p$, $G_1 = I$}
		\sForAll{ $k \gets 1: \#iterations$} {
			\State $g(\boldsymbol{\beta}_{k}) \gets \nabla_{\boldsymbol \beta} \tilde{L} (\cdot|d_k )$
			\State $z_k \sim \mathcal{N}(0, I)$
			\State $ \tilde{G}_k g_k \gets \xi$, $\tilde{S}_k z_k \gets \eta$ from  L-BFGS algorithm \ref{alg:l-bfgs}
			\State $\displaystyle{ G_k g_k \gets (1-\omega_{k+1}) G_{k-1} g_k  + \omega_{k+1} \tilde{G}_k g_k }$
			\State $\displaystyle { S_k z_k \gets (1-\omega_{k+1}) S_{k-1} z_k + \omega_{k+1} \tilde{S}_k z_k}$
			\State $\xi_k \gets G_k g_k / ||G_k g_k || $
			\State $\eta_k \gets S_k z_k/|| S_k z_k ||  $
			\State $\displaystyle{\boldsymbol{\beta}_{k+1} \gets \boldsymbol{\beta}_{k} + \epsilon_k \xi_k  + \sqrt{ 2\epsilon_k \tau^{-1}} \eta_k}$

			\If {Pruning}
			\State Prune the bottom -$p\%$ weights with lowest magnitude
			\State Increase the sparse rate	
			\EndIf
	}
	\end{algorithmic}
\end{algorithm}

\section{Convergence analysis}\label{sec:analysis}
In this section, we will discuss the convergence of stochastic approximation and the proposed algorithm. 
\subsection{Convergence of stochastic approximation of preconditioning matrix}
Denote by $\vec{G}$ vectorization of a matrix $G$, we first state the following stability lemma.
\begin{lemma} \label{lemma:stability}
	The mean field function $h(G)$ satisfies $\forall G \in  \Theta $, $\langle \vec{h(G)}, \vec{G} - \vec{G_*} \rangle \leq -||\vec{G} - \vec{G_*}||^2$, where $||\cdot||$ denotes $l_2$ norm. The mean field system $\frac{d \vec{G}}{d t} = \vec{h(G)}$ is globally asymptotically stable and $G_*$ is the globally asymptotically stable equilibrium.
\end{lemma}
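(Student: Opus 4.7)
The plan is to unpack $h(G)$ directly from its definition and observe that the stated inequality in fact reduces to an identity. Writing $h(G) = \mathbb{E}_\pi[H(\boldsymbol{\beta}, G)] = \mathbb{E}_\pi[g_G(\boldsymbol{\beta})] - G$, and noting that in the L-BFGS construction of Algorithm \ref{alg:l-bfgs} the mapping $g_G(\boldsymbol{\beta})$ is determined purely by $\boldsymbol{\beta}$ together with the recent sample history and does not functionally depend on the running estimate $G$, the defining fixed-point equation $\int g_G(\boldsymbol{\beta})\pi(\boldsymbol{\beta})\,d\boldsymbol{\beta} = G_*$ yields $\mathbb{E}_\pi[g_G(\boldsymbol{\beta})] = G_*$ for every admissible $G$. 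Consequently $\vec{h(G)} = \vec{G_*} - \vec{G}$, and
\[
\langle \vec{h(G)},\ \vec{G}-\vec{G_*}\rangle = \langle \vec{G_*}-\vec{G},\ \vec{G}-\vec{G_*}\rangle = -\|\vec{G}-\vec{G_*}\|^2,
\]
which gives the stated inequality (with equality).

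For the second claim, I would use the standard quadratic Lyapunov function $V(\vec{G}) = \tfrac{1}{2}\|\vec{G}-\vec{G_*}\|^2$, which is nonnegative on $\Theta$, vanishes only at $\vec{G_*}$, and is radially unbounded. Along any trajectory of the mean-field ODE $\dot{\vec{G}} = \vec{h(G)}$, differentiation gives $\dot V = \langle \vec{G}-\vec{G_*},\ \vec{h(G)}\rangle = -\|\vec{G}-\vec{G_*}\|^2 = -2V$, so Gr\"onwall yields $V(t) = V(0)e^{-2t}$ and $\vec{G}(t)\to \vec{G_*}$ exponentially fast from every initial condition. Classical Lyapunov theory then identifies $G_*$ as the unique globally asymptotically stable equilibrium of the mean-field flow.

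The only non-routine step is the reduction $h(G) = G_* - G$, which leans on $g_G$ being effectively independent of $G$ in the L-BFGS recursion. If one wished to allow genuine dependence of $g_G$ on the running preconditioner, the clean equality would have to be weakened to a contractive bound of the form $\|\mathbb{E}_\pi[g_G(\boldsymbol{\beta})] - G_*\| \leq \alpha\|G - G_*\|$ for some $\alpha < 1$, which one could attempt to derive from the uniform spectral bounds of Lemma \ref{lemma:eigen_bd} together with continuity of the L-BFGS map in its initial seed; this is the place I expect any genuine difficulty to sit. The remaining Lyapunov calculation and the deduction of global asymptotic stability are entirely standard once the dissipation inequality is in hand.
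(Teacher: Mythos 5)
Your proof is correct and takes essentially the same route as the paper's: both reduce $h(G)$ to $G_* - G$ by invoking the fixed-point equation $\int g_G(\boldsymbol{\beta})\pi(\boldsymbol{\beta})\,d\boldsymbol{\beta} = G_*$, observe that the inner product is then exactly $-\|\vec{G}-\vec{G_*}\|^2$, and conclude global asymptotic stability with the quadratic Lyapunov function $V(\vec{G})=\tfrac{1}{2}\|\vec{G_*}-\vec{G}\|^2$. Your explicit remark that this hinges on $g_G$ being effectively independent of the running preconditioner $G$ is a fair point the paper leaves implicit, but it does not change the argument.
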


\begin{proof}
	Since  $H(\boldsymbol{\beta}, G) = g_{G}(\boldsymbol{\beta})  - G$, the mean field function $h(G)$ is 
	\[
	h(G) = \int \left( g_{G}(\boldsymbol{\beta}) -G \right) \pi(\boldsymbol{\beta}) d \boldsymbol{\beta} = G_* - G
	\]
   Then,
	\[
	\langle \vec{h(G)}, \vec{G} - \vec{G_*} \rangle = -||\vec{G} - \vec{G_*}||^2 \leq -||\vec{G} - \vec{G_*}||^2
	\]
		Consider the positive definite Lyapunov function $V(\vec{G}) = \frac{1}{2} ||\vec{G_*} - \vec{G}||^2$, it's easy to see that $\langle \nabla V, \frac{d \vec{G}}{d t} \rangle  = \langle \vec{G} - \vec{G_*} ,  \vec{G_*} - \vec{G} \rangle = -||\vec{G} - \vec{G_*}||^2<0$, which completes the proof.
	
\end{proof}

\begin{assumption}
	The step size $\{\omega_k\}$ satisfies
	\begin{align*}
	\sum_{k=1}^{\infty} \omega_k  = +\infty, \;\;\;\;\;\;\; \sum_{k=1}^{\infty} \omega_k^2  < +\infty \\
	{\lim \inf}_{k\rightarrow \infty} 2\frac{\omega_k}{\omega_{k+1}} + \frac{\omega_{k+1}-\omega_k}{\omega_{k+1}^2} > 0 
	\end{align*}
	
\end{assumption}
In practice, one can choose $\omega_{k} = c_1(k+c_2)^{-\alpha}$ for $\alpha \in (0,1])$ and constants $c_1, c_2$.

%
%
%

\begin{lemma}  \label{lemma:unifrom_G}
	There exists $Q>0$, such that $\sup \mathbb{E}||G_k||^2 \leq Q^2$.
\end{lemma}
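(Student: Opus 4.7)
The plan is to exploit the fact that the recursion \eqref{eq:G_sa} can be rewritten as a convex combination of $G_k$ and a bounded perturbation, and then to iterate an expectation inequality of the form $\mathbb{E}\|G_{k+1}\|^2 \leq (1-\omega_{k+1})\mathbb{E}\|G_k\|^2 + \omega_{k+1} K^2$ to obtain a uniform bound by induction. The key structural observation is that $\tilde{H}(\boldsymbol{\beta},G) = g_G(\boldsymbol{\beta}) - G + \delta$, so
\[
G_{k+1} \;=\; (1-\omega_{k+1})\,G_k \;+\; \omega_{k+1}\bigl(g_{G_k}(\boldsymbol{\beta}_{k+1}) + \delta_k\bigr).
\]
Since $\omega_k = c_1(k+c_2)^{-\alpha} \to 0$, I may assume (by an initial choice of $c_1$, or by starting the induction past a finite index) that $\omega_{k+1} \in (0,1]$ for all $k$, so this is a true convex combination.

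Next I would bound the two ingredients on the right-hand side. By construction $g_{G_k}(\boldsymbol{\beta}_{k+1})$ is the damped online L-BFGS approximation $\tilde{G}_k$ from Algorithm \ref{alg:l-bfgs}, whose eigenvalues satisfy $\tfrac{1}{\tilde{A}} \preceq \tilde{G}_k \preceq \tfrac{1}{\tilde{a}}$ by Lemma \ref{lemma:eigen_bd}. Hence $\|g_{G_k}(\boldsymbol{\beta}_{k+1})\| \leq C_1$ for a deterministic constant $C_1$ depending only on $\tilde{a},\tilde{A}$ and the dimension. Combined with $\mathbb{E}\|\delta_k\|^2 \leq C_0^2$ from \eqref{eq:H_tilde}, the inequality $(a+b)^2 \leq 2a^2 + 2b^2$ gives
\[
\mathbb{E}\bigl\|g_{G_k}(\boldsymbol{\beta}_{k+1}) + \delta_k\bigr\|^2 \;\leq\; 2C_1^2 + 2C_0^2 \;=:\; K^2.
\]

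Now I apply convexity of $x \mapsto \|x\|^2$ to the convex combination, which yields $\|G_{k+1}\|^2 \leq (1-\omega_{k+1})\|G_k\|^2 + \omega_{k+1}\|g_{G_k}(\boldsymbol{\beta}_{k+1}) + \delta_k\|^2$. Taking expectations gives the one-step recurrence
\[
\mathbb{E}\|G_{k+1}\|^2 \;\leq\; (1-\omega_{k+1})\,\mathbb{E}\|G_k\|^2 \;+\; \omega_{k+1} K^2.
\]
Setting $Q^2 := \max\{\|G_1\|^2, K^2\}$ and proceeding by induction: if $\mathbb{E}\|G_k\|^2 \leq Q^2$, then $\mathbb{E}\|G_{k+1}\|^2 \leq (1-\omega_{k+1})Q^2 + \omega_{k+1}Q^2 = Q^2$. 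Therefore $\sup_k \mathbb{E}\|G_k\|^2 \leq Q^2$, which is the claim.

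The only subtlety I anticipate is the tacit use of $\omega_{k+1} \in [0,1]$ (needed to call the update a convex combination and to apply Jensen in that form); this is a standard Robbins–Monro regularity point and is either ensured by the step-size parametrization $\omega_k = c_1(k+c_2)^{-\alpha}$ after shifting the index, or the early finitely many steps are absorbed into the constant $Q^2$. Everything else is routine: the essential inputs are (i) the deterministic eigenvalue bound on $\tilde{G}_k$ from Lemma \ref{lemma:eigen_bd}, and (ii) the assumed $L^2$-bound on the stochastic-approximation bias $\delta_k$.
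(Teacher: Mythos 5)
Your proof is correct and follows essentially the same route as the paper: rewrite the stochastic-approximation update as a convex combination of $G_k$ and the bounded L-BFGS output, invoke the eigenvalue bounds of Lemma \ref{lemma:eigen_bd}, and close by induction. The only cosmetic differences are that you use convexity of $\|\cdot\|^2$ where the paper expands the square and applies Cauchy--Schwarz to the cross term, and that you bound $g_{G_k}$ and $\delta_k$ separately where the paper absorbs both into the single bounded quantity $\tilde{G}_{k+1}$; both yield the same uniform bound.
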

\begin{proof}
	From Lemma \ref{lemma:eigen_bd}, we have
	 \begin{equation*}
	 \frac{1}{\tilde{A}} \preceq \tilde{G}_k \preceq \frac{1}{\tilde{a}}
	\end{equation*}
	We will prove by induction. For $k=0$, $\mathbb{E}||G_0||^2 \leq  \frac{1}{\tilde{a}} := Q$. Assume we have $\mathbb{E}||G_k||^2 \leq  Q$, then
	\begin{align*}
\mathbb{E}||G_{k+1}||^2 &= 	\mathbb{E}||(1-\omega_{k})G_{k} +\omega_{k} \tilde{G}_{k+1}||^2 \\
&\leq (1-\omega_{k})^2\mathbb{E}||G_{k}||^2 + 2 (1-\omega_{k})\omega_{k} \sqrt{ \mathbb{E}||G_{k}||^2   \mathbb{E}||\tilde{G}_{k+1}||^2 }   + \omega_{k}^2 \mathbb{E}|| \tilde{G}_{k+1}||^2\\
&\leq (1-\omega_{k})^2 Q^2 + 2 (1-\omega_{k})\omega_{k} \sqrt{Q^2  (\frac{1}{\tilde{a}})^2 } + \omega_{k}^2 (\frac{1}{\tilde{a}})^2 \leq Q^2.
	\end{align*}
	This completes the proof. 
\end{proof}

\begin{assumption} \label{assumption:poisson}
	For all $G\in \Theta$, there exists a function $\mu_G(\boldsymbol \beta)$ that solves the Poisson equation 
	$\mu_G(\boldsymbol \beta) - \Pi_G \mu_G(\boldsymbol \beta) = H(G,\boldsymbol \beta) - h(G)$. 
	There exists a constant $C$ such that
	\begin{align*}
	\mathbb{E} ||  \Pi_G \mu_G(\boldsymbol \beta) || &\leq C\\
	\mathbb{E} ||  \Pi_G \mu_G(\boldsymbol \beta) - \Pi_{G'} \mu_{G'}(\boldsymbol \beta)||  &\leq C ||G-G'||
	\end{align*}
	Here $||\cdot||$ denote the Frobenius norm.
\end{assumption}

\begin{lemma}
	There exists a constant $Q_2>0$ such that
	\begin{equation}
		|| \tilde{H}(\boldsymbol{\beta}, G) ||^2 \leq Q_2(1+||G_k-G_*||^2)
	\end{equation}

\end{lemma}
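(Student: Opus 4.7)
The plan is to bound $\tilde H$ term by term after splitting off the pieces that are uniformly controlled (the L-BFGS output $g_G(\boldsymbol\beta)$, the fixed point $G_\star$, and the noise $\delta$), leaving only the deviation $G-G_\star$ as an unbounded contribution. By \eqref{eq:H_tilde},
\[
\tilde H(\boldsymbol\beta,G)\;=\;g_G(\boldsymbol\beta)-G+\delta(M,n,\epsilon_k)\;=\;\bigl(g_G(\boldsymbol\beta)-G_\star\bigr)\;-\;\bigl(G-G_\star\bigr)\;+\;\delta,
\]
so applying the elementary inequality $\|a+b+c\|^2\le 3(\|a\|^2+\|b\|^2+\|c\|^2)$ gives
\[
\|\tilde H(\boldsymbol\beta,G)\|^2\;\le\;3\|g_G(\boldsymbol\beta)-G_\star\|^2+3\|G-G_\star\|^2+3\|\delta\|^2.
\]

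Next I would control the first and third terms by constants independent of $G$ and $\boldsymbol\beta$. For the first, recall that $g_G(\boldsymbol\beta)$ is produced by the damped online L-BFGS recursion of Algorithm \ref{alg:l-bfgs}, and Lemma \ref{lemma:eigen_bd} yields $\tfrac{1}{\tilde A}\preceq g_G(\boldsymbol\beta)\preceq \tfrac{1}{\tilde a}$ uniformly. Hence $\|g_G(\boldsymbol\beta)\|\le C_1$ in Frobenius norm, for a constant $C_1$ depending only on $\tilde a,\tilde A$ and the dimension. Since $G_\star=\int g_G(\boldsymbol\beta)\pi(\boldsymbol\beta)\,d\boldsymbol\beta$ is an average of matrices with this same bound, $\|G_\star\|\le C_1$ as well. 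By the triangle inequality, $\|g_G(\boldsymbol\beta)-G_\star\|^2\le 4C_1^2$. Taking expectations and invoking the standing assumption $\mathbb E\|\delta(M,n,\epsilon_k)\|^2\le C_0^2$ therefore gives
\[
\mathbb E\|\tilde H(\boldsymbol\beta,G)\|^2\;\le\;12C_1^2\;+\;3\|G-G_\star\|^2\;+\;3C_0^2.
\]

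Finally I would factor the constant into the required form: set $Q_2=\max\bigl\{3,\,12C_1^2+3C_0^2\bigr\}$, so that
\[
\mathbb E\|\tilde H(\boldsymbol\beta,G)\|^2\;\le\;Q_2\bigl(1+\|G-G_\star\|^2\bigr),
\]
applied with $G=G_k$, which is exactly the claim. The main obstacle I anticipate is a notational/interpretive one: the inequality is sharp only after taking expectation over the randomness in $\delta$ (and $\boldsymbol\beta$ if it is treated as random), since $\delta$ is only controlled in second moment. The technical content is entirely encapsulated in Lemma \ref{lemma:eigen_bd}, which guarantees the uniform spectral bound on the L-BFGS approximation $\tilde G_k$; without that uniform bound the $\|g_G(\boldsymbol\beta)-G_\star\|^2$ term could not be absorbed into a constant and the whole estimate would fail.
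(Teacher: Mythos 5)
Your proof is correct and follows essentially the same route as the paper's: both arguments rest on the uniform spectral bound for $g_G(\boldsymbol\beta)$ from Lemma \ref{lemma:eigen_bd}, an elementary quadratic inequality to separate the terms of $\tilde H$, and the standing assumption $\mathbb{E}\|\delta(M,n,\epsilon_k)\|^2\le C_0^2$; your grouping around $G_*$ versus the paper's two-step bound $\|G_k\|^2\le C(1+\|G_k-G_*\|^2)$ is only a cosmetic difference. Your remark that the $\delta$ term forces the bound to be read in expectation is a fair observation that applies equally to the paper's own proof.
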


\begin{proof}
	\begin{equation*}
||H(\boldsymbol{\beta}, G) ||^2 \leq  2||g_G(\boldsymbol{\beta})||^2 + 2 ||G_k||^2 \leq 2(\frac{1}{\tilde{a}})^2  + 2 ||G_k||^2 \leq C_1 (1+||G_k||^2 ) \leq \tilde{C}_1 (1+||G_k-G_*||^2).
	\end{equation*}
Then		
			\begin{align*}
		|| \tilde{H}(\boldsymbol{\beta}, G) ||^2 &= ||H(\boldsymbol{\beta}, G)  + \delta(M,n,\epsilon_k)||^2 \leq 2||H(\boldsymbol{\beta}, G) ||^2+2|| \delta(M,n,\epsilon_k)||^2\\
		&\leq 2 \tilde{C}_1 (1+||G_k-G_*||^2) + 2C_0^2 \leq Q_2 (1+||G_k-G_*||^2) 
		\end{align*}
		where $Q_2= 2 \tilde{C}_1 +  2C_0^2$.
\end{proof}

\begin{lemma} \label{lemma:Lamda}
	Let $k_0$ be an integer which satisfies 
	\begin{equation*}
	\inf_{k\geq k_0} \frac{\omega_{k+1} - \omega_k}{\omega_k\omega_{k+1} } + 2 -Q\omega_{k+1}  > 0
	\end{equation*}
	Then $\forall k \geq k_0$, the sequence $\{ \Lambda_k^K\}_{k=k_0}^{K}$ is increasing, where
	\begin{equation*}
	 \Lambda_k^K = 	\begin{cases}
	 \displaystyle{ 2\omega_k \prod_{j=k}^{K-1} (1-2\omega_{k+1} + Q\omega_{k+1}^2)}, & \text{if } \displaystyle{  k < K} \\
	 2\omega_k, & \text{if } \displaystyle{  k \geq K} 
	 \end{cases} 
	\end{equation*}
\end{lemma}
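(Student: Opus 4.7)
The plan is to verify monotonicity in a purely algebraic way: fix $K$ and show $\Lambda_{k+1}^K \ge \Lambda_k^K$ for every $k\in\{k_0,\ldots,K-1\}$. I will read the product in the definition with $\omega_{j+1}$ in the $j$-th factor (the stated $\omega_{k+1}$ in the body of the product is presumably a typographical slip, since otherwise the index $j$ would be inert). Under that reading I would split into the generic case $k+1<K$ and the boundary case $k+1=K$.

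In the generic case, both $\Lambda_k^K$ and $\Lambda_{k+1}^K$ are given by the product formula, and everything except one factor telescopes. Concretely,
\[
\frac{\Lambda_{k+1}^K}{\Lambda_k^K}
=\frac{2\omega_{k+1}\prod_{j=k+1}^{K-1}(1-2\omega_{j+1}+Q\omega_{j+1}^2)}{2\omega_k\prod_{j=k}^{K-1}(1-2\omega_{j+1}+Q\omega_{j+1}^2)}
=\frac{\omega_{k+1}}{\omega_k\,(1-2\omega_{k+1}+Q\omega_{k+1}^2)}.
\]
So $\Lambda_{k+1}^K\ge\Lambda_k^K$ is equivalent to
$\omega_{k+1}\ge\omega_k(1-2\omega_{k+1}+Q\omega_{k+1}^2)$,
i.e.\ $\omega_{k+1}-\omega_k+2\omega_k\omega_{k+1}-Q\omega_k\omega_{k+1}^2\ge 0$. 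Dividing through by the strictly positive quantity $\omega_k\omega_{k+1}$ produces exactly
\[
\frac{\omega_{k+1}-\omega_k}{\omega_k\omega_{k+1}}+2-Q\omega_{k+1}\ge 0,
\]
which is the defining hypothesis of $k_0$ applied at index $k\ge k_0$. In the boundary case $k=K-1$, we have $\Lambda_K^K=2\omega_K$ and $\Lambda_{K-1}^K=2\omega_{K-1}(1-2\omega_K+Q\omega_K^2)$, and the same rearrangement reduces the required inequality to the same condition at index $K-1$, so nothing new is needed.

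The only points that require care, rather than additional work, are that $\omega_k,\omega_{k+1}>0$ so we may divide, and that the factor $1-2\omega_{k+1}+Q\omega_{k+1}^2$ is positive for $k\ge k_0$, so that the chained product remains positive and the direction of the inequality is preserved. Both follow from the step-size assumption $\omega_k\to 0$ together with the definition of $k_0$: for $k$ large enough the hypothesis forces $Q\omega_{k+1}<2+O(\omega_k^{-1})$, and in particular $\omega_{k+1}$ is small, which makes $1-2\omega_{k+1}+Q\omega_{k+1}^2$ bounded below by a positive constant after a finite index. I expect no genuine obstacle; the lemma is a one-line algebraic calculation whose sole content is recognizing the telescoping cancellation and then recovering the hypothesis in its ``cleared'' form.
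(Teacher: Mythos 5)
The paper states this lemma without proof, so there is nothing to compare your argument against; on its own terms your proposal is correct and is the natural (indeed, essentially forced) argument. Your reading of the product as $\prod_{j=k}^{K-1}\bigl(1-2\omega_{j+1}+Q\omega_{j+1}^2\bigr)$ is the right one: it is consistent with how $\Lambda_j^k$ is used in Theorem \ref{thm:main_G}, where these coefficients arise from unrolling the recursion $\mathbb{E}\|E_{k+1}\|^2 \leq (1-2\omega_{k+1}+Q\omega_{k+1}^2)\mathbb{E}\|E_k\|^2+\cdots$, and with that reading the ratio $\Lambda_{k+1}^K/\Lambda_k^K=\omega_{k+1}/\bigl(\omega_k(1-2\omega_{k+1}+Q\omega_{k+1}^2)\bigr)$ telescopes exactly as you compute, and clearing denominators recovers the defining inequality for $k_0$ verbatim. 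Two small points to tighten: first, your parenthetical ``$Q\omega_{k+1}<2+O(\omega_k^{-1})$'' is vacuous as written; the positivity of each factor $1-2\omega_{k+1}+Q\omega_{k+1}^2$ should instead be drawn directly from the step-size assumption $\sum_k\omega_k^2<\infty$, which forces $\omega_k\to 0$ and hence $1-2\omega_{k+1}+Q\omega_{k+1}^2\geq 1-2\omega_{k+1}>0$ for all $k$ beyond some finite index (which one may absorb into the choice of $k_0$). Second, since the hypothesis is a strict inequality, your argument actually yields that the sequence is strictly increasing, which is slightly more than claimed. Neither point is a gap.
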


\begin{lemma}\label{lemma:sequence}
	There exists $\lambda_0$ and $k_0$ such that $\forall \lambda \geq\lambda_0$ and $\forall k \geq k_0$, the sequence $\displaystyle{ \{\psi_k \}_{k=1}^{\infty} }$  with $\displaystyle{ \psi_k = \lambda \omega_k + 2Q \sup_{i\geq k_0}  \triangle_i}$ satisfies
	\begin{equation}\label{eq:psi}
	\psi_{k+1} \geq (1-2\omega_{k+1} +Q\omega_{k+1}^2 )	\psi_k+ 14CQ\omega_{k+1}^2 + 4Q\triangle_k  \omega_{k+1}   
	\end{equation}
\end{lemma}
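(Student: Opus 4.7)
The plan is a direct algebraic verification of the recursion \eqref{eq:psi} via a careful choice of $\lambda_0$ and $k_0$. Let me abbreviate $\Delta^{\star}:=\sup_{i\geq k_0}\triangle_i$, so that $\psi_k = \lambda\omega_k + 2Q\Delta^{\star}$ is affine in $\omega_k$ with a fixed additive constant. Substituting this form into both sides of \eqref{eq:psi} and moving everything to one side, the desired inequality
\[
\lambda\omega_{k+1} + 2Q\Delta^{\star}
\;\geq\; (1-2\omega_{k+1}+Q\omega_{k+1}^2)(\lambda\omega_k+2Q\Delta^{\star}) + 14CQ\omega_{k+1}^2 + 4Q\triangle_k\omega_{k+1}
\]
collects into a $\lambda$-piece, a $\Delta^{\star}$-piece, and residuals. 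After expansion one gets
\[
\lambda\bigl[\omega_{k+1}-(1-2\omega_{k+1}+Q\omega_{k+1}^2)\omega_k\bigr]
+ 2Q\Delta^{\star}\bigl[2\omega_{k+1}-Q\omega_{k+1}^2\bigr]
\;\geq\; 14CQ\omega_{k+1}^2 + 4Q\triangle_k\omega_{k+1}.
\]

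The first step is to recognize the bracketed coefficient of $\lambda$ as exactly $\omega_{k+1}\omega_k$ times the expression appearing in Lemma \ref{lemma:Lamda}. Factoring out $\omega_{k+1}\omega_k$ gives
\[
\omega_{k+1}-(1-2\omega_{k+1}+Q\omega_{k+1}^2)\omega_k \;=\; \omega_{k+1}\omega_k\left[\frac{\omega_{k+1}-\omega_k}{\omega_{k+1}\omega_k}+2-Q\omega_{k+1}\right],
\]
and choosing $k_0$ as in Lemma \ref{lemma:Lamda} guarantees the bracket is strictly positive and bounded away from zero for $k\geq k_0$. I will call this lower bound $c_{\star}:=\inf_{k\geq k_0}\bigl[\tfrac{\omega_{k+1}-\omega_k}{\omega_{k+1}\omega_k}+2-Q\omega_{k+1}\bigr]>0$. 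Note that the second assumption on the step sizes together with $\omega_k\downarrow 0$ ensures $c_{\star}>0$ for $k_0$ large enough.

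Second, I absorb the $\Delta^{\star}$ term. Since $\Delta^{\star}\geq \triangle_k$ for all $k\geq k_0$, one has
\[
4Q\Delta^{\star}\omega_{k+1} - 4Q\triangle_k\omega_{k+1} \;\geq\; 0,
\]
so the $\Delta^{\star}$-contribution $4Q\omega_{k+1}\Delta^{\star} - 2Q^2\omega_{k+1}^2\Delta^{\star}$ on the left is at least $-2Q^2\omega_{k+1}^2\Delta^{\star}$ after cancellation against the $4Q\triangle_k\omega_{k+1}$ term on the right. The remaining inequality, after dividing by $\omega_{k+1}^2$, reads
\[
\lambda\,\frac{\omega_k}{\omega_{k+1}}\left[\frac{\omega_{k+1}-\omega_k}{\omega_{k+1}\omega_k}+2-Q\omega_{k+1}\right] \;\geq\; 14CQ + 2Q^2\Delta^{\star}.
\]

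Third, I choose $\lambda_0$. For step sizes of the form $\omega_k=c_1(k+c_2)^{-\alpha}$ with $\alpha\in(0,1]$, the ratio $\omega_k/\omega_{k+1}$ is bounded below by $1$ uniformly, and more generally Assumption 2 forces $\omega_k/\omega_{k+1}\to 1$, so $\inf_{k\geq k_0}\omega_k/\omega_{k+1}$ is some positive constant $\rho_{\star}>0$. Setting
\[
\lambda_0 \;:=\; \frac{14CQ + 2Q^2\Delta^{\star}}{\rho_{\star}\,c_{\star}}
\]
ensures the displayed inequality holds for every $\lambda\geq\lambda_0$ and every $k\geq k_0$, which yields \eqref{eq:psi}.

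The only real obstacle is verifying that the bookkeeping quantities $c_{\star}$ and $\rho_{\star}$ are indeed positive, which is precisely the content of Assumption 2 combined with Lemma \ref{lemma:Lamda}; the rest is routine algebra. A secondary point to be careful about is that $\Delta^{\star}$ is finite, which should be available from the uniform $L^2$ bound on $G_k$ from Lemma \ref{lemma:unifrom_G} and Assumption \ref{assumption:poisson} controlling the Poisson solution — otherwise the choice of $\lambda_0$ would not be well-defined.
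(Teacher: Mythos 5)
Your proposal is correct and follows essentially the same route as the paper: substitute the affine form of $\psi_k$, cancel the $4Q\triangle_k\omega_{k+1}$ term against part of the $\sup_i\triangle_i$ contribution, divide by $\omega_{k+1}^2$, and pick $\lambda_0$ using the positive lower bound on $\tfrac{\omega_{k+1}-\omega_k}{\omega_{k+1}^2}+2\tfrac{\omega_k}{\omega_{k+1}}-Q\omega_k$ supplied by Assumption 2 and the $k_0$ of Lemma \ref{lemma:Lamda}. (Incidentally, your residual constant $14CQ$ after dividing by $\omega_{k+1}^2$ is the correct one; the paper's $C_4=14CQ^2$ appears to be a typo.)
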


\begin{proof}
	Plug in $\displaystyle{ \psi_k = \lambda \omega_k + 2Q \sup_{i\geq k_0}  \triangle_i}$ in equation \eqref{eq:psi}, it's equivalent to 
		\begin{equation*}
	(\lambda \omega_{k+1} + 2Q \sup_{i\geq k_0}  \triangle_i) \geq (1-2\omega_{k+1} +Q\omega_{k+1}^2 )	(\lambda \omega_k + 2Q \sup_{i\geq k_0}  \triangle_i)+ 14CQ\omega_{k+1}^2 + 4Q\triangle_k  \omega_{k+1} 
	\end{equation*}
	Rearranging terms, we need to show
			\begin{equation*}
\lambda	( \omega_{k+1} -\omega_k + 2 \omega_k \omega_{k+1} - Q\omega_k \omega_{k+1}^2 ) \geq (-2\omega_{k+1} +  Q \omega_{k+1}^2)	(   2Q \sup_{i\geq k_0}  \triangle_i)+ 14CQ\omega_{k+1}^2 + 4Q\triangle_k  \omega_{k+1} 
	\end{equation*}
	Using  the fact that $\triangle_k-\sup_{i\geq k_0}  \triangle_i<0$, it is suffices to show that 
				\begin{equation*}
	\lambda	( C_3 - Q\omega_k) \omega_{k+1}^2 \geq   \omega_{k+1}^2(  C_4 + 2Q^2 \sup_{i\geq k_0}  \triangle_i)
	\end{equation*}
	where $\displaystyle{ C_3 = 	{\lim \inf}_{k\rightarrow \infty} 2\frac{\omega_k}{\omega_{k+1}} + \frac{\omega_{k+1}-\omega_k}{\omega_{k+1}^2} } $, $C_4 = 14CQ^2$.
	By choosing $\lambda_0$ and $k_0$ such that $\displaystyle{\omega_{k_0} \leq \frac{C_3}{2Q}}$, and $\displaystyle{\lambda_0 = \frac{4Q^2 \sup_{i\geq k_0}  \triangle_i + 2C_4}{C_3}}$, the desired inequality \eqref{eq:psi} holds.

\end{proof}

\begin{theorem} \label{thm:main_G}
	
	Suppose Assumptions 1-3 hold, the sequence $\{G_k, k=1,\cdots,\infty \}$ converge to $G_*$, and there exist a sufficiently large $k_0$ such that
	\begin{equation*}
	\mathbb{E}||G_k - G_*||^2 = \mathcal{O} (\lambda \omega_k + \sup_{i\geq k_0}	\mathbb{E} ||\delta(M,n,\epsilon_i)||) 
	\end{equation*}
	
\end{theorem}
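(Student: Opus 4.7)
The plan is to derive a one-step recursive inequality for $D_k := \mathbb{E}\|G_k - G_*\|^2$ that matches the hypothesis of Lemma \ref{lemma:sequence}, and then iterate using the monotonicity of $\Lambda_k^K$ from Lemma \ref{lemma:Lamda}. First I would expand the update $G_{k+1} = G_k + \omega_{k+1}\tilde{H}(\boldsymbol\beta_{k+1}, G_k)$ in the squared Frobenius norm:
\[
\|G_{k+1} - G_*\|^2 = \|G_k - G_*\|^2 + 2\omega_{k+1}\langle G_k - G_*, \tilde{H}(\boldsymbol\beta_{k+1}, G_k)\rangle + \omega_{k+1}^2\|\tilde{H}(\boldsymbol\beta_{k+1}, G_k)\|^2,
\]
take expectations, and control the last term by $Q_2\omega_{k+1}^2(1+D_k)$ using the preceding bound $\|\tilde H\|^2 \le Q_2(1+\|G_k-G_*\|^2)$.

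Next, I would decompose $\tilde H(\boldsymbol\beta_{k+1}, G_k) = h(G_k) + [H(\boldsymbol\beta_{k+1}, G_k) - h(G_k)] + \delta(M,n,\epsilon_{k+1})$ inside the inner product. The mean-field piece $h(G_k)$ immediately gives $-2\omega_{k+1} D_k$ via the stability Lemma \ref{lemma:stability}. The deterministic bias piece contributes, after Cauchy--Schwarz and Young's inequality, a term of the form $4Q\omega_{k+1}\triangle_k$ where $\triangle_k := \mathbb{E}\|\delta(M,n,\epsilon_k)\|$, matching the notation used in Lemma \ref{lemma:sequence}. What remains is the noise term $\mathbb{E}\langle G_k - G_*, H(\boldsymbol\beta_{k+1}, G_k) - h(G_k)\rangle$, which is not a martingale difference because $G_k$ is correlated with the sampling kernel used at step $k+1$.

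To handle this noise, I would invoke Assumption \ref{assumption:poisson} and write $H(\boldsymbol\beta_{k+1}, G_k) - h(G_k) = \mu_{G_k}(\boldsymbol\beta_{k+1}) - \Pi_{G_k}\mu_{G_k}(\boldsymbol\beta_{k+1})$. Following the standard stochastic approximation telescoping in the spirit of Andrieu--Moulines--Priouret, I would rewrite the contribution from $\Pi_{G_k}\mu_{G_k}(\boldsymbol\beta_{k+1})$ as a martingale difference plus two kinds of remainders: a boundary term controlled by the uniform bound $\mathbb{E}\|\Pi_G \mu_G\| \le C$, and a Lipschitz remainder controlled by $\mathbb{E}\|\Pi_{G_k}\mu_{G_k} - \Pi_{G_{k-1}}\mu_{G_{k-1}}\| \le C\,\mathbb{E}\|G_k - G_{k-1}\| = \mathcal{O}(\omega_k)$, using the second bound of Assumption \ref{assumption:poisson}. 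Combined with the uniform second moment from Lemma \ref{lemma:unifrom_G}, all of these contributions to $D_{k+1}$ collapse into the $14CQ\omega_{k+1}^2$ remainder appearing on the right-hand side of Lemma \ref{lemma:sequence}.

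Assembling Steps 1--3 yields
\[
D_{k+1} \leq (1 - 2\omega_{k+1} + Q\omega_{k+1}^2)\,D_k + 14CQ\omega_{k+1}^2 + 4Q\triangle_k\omega_{k+1},
\]
which is exactly the recursion majorized by $\psi_k = \lambda\omega_k + 2Q\sup_{i\geq k_0}\triangle_i$ according to Lemma \ref{lemma:sequence}. A direct induction, together with the monotonicity of $\Lambda_k^K$ from Lemma \ref{lemma:Lamda}, then shows $D_K \leq \psi_K$ for all $K \geq k_0$, which gives the stated rate. The main obstacle is precisely the Poisson-equation bookkeeping in the noise step: one must replace the non-adapted quantity $H(\boldsymbol\beta_{k+1}, G_k) - h(G_k)$ by a martingale increment plus remainders whose cumulative size is $\mathcal{O}(\omega_{k+1}^2)$ rather than $\mathcal{O}(\omega_{k+1})$, and this is where the full strength of Assumption \ref{assumption:poisson} is used. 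The rest of the argument is routine algebra driven by Lemmas \ref{lemma:unifrom_G}, \ref{lemma:Lamda}, and \ref{lemma:sequence}.
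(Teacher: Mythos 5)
Your proposal follows essentially the same route as the paper: the same expansion of $\|G_{k+1}-G_*\|^2$, the same Poisson-equation decomposition of $H(\boldsymbol\beta_{k+1},G_k)-h(G_k)$ into a martingale increment, a Lipschitz remainder in $G$, and a telescoping boundary piece, and the same final appeal to Lemmas \ref{lemma:unifrom_G}, \ref{lemma:Lamda}, and \ref{lemma:sequence}. The one place where your write-up is stated too strongly is the claim that all Poisson-equation remainders ``collapse into the $14CQ\omega_{k+1}^2$ term'': the boundary piece, which in the paper's notation is $2\omega_{k+1}(z_k - z_{k+1})$ with $z_k = \langle E_k, \Pi_{G_{k-1}}\mu_{G_{k-1}}(\boldsymbol\beta_k)\rangle$, is only $\mathcal{O}(\omega_{k+1})$ per step (since $|z_k-z_{k+1}|$ is merely bounded by a constant), so it cannot be absorbed into the $\omega_{k+1}^2$ remainder term by term. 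The paper carries it explicitly through the recursion and only controls it at the end by Abel summation against the weights $\Lambda_j^k$, using the monotonicity established in Lemma \ref{lemma:Lamda}; this is what produces the extra $12QC\omega_k$ contribution and the final constant $\lambda = \lambda_0 + 12QC$. Since you do invoke the monotonicity of $\Lambda_k^K$ in your induction, you clearly have the right mechanism in mind, but the one-step recursion as you wrote it omits the $z_k - z_{k+1}$ term, and without it the induction would not actually need Lemma \ref{lemma:Lamda} at all --- a sign that the bookkeeping of that term needs to be restored for the argument to close.
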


	\begin{proof}
		Denote by $E_k = G_k-G_*$, we have
		\begin{equation}\label{eq:thm-error}
		||E_{k+1}||^2 = ||E_k||^2 + \omega_{k+1}^2|| \tilde{H}(\boldsymbol{\beta_{k+1}}, G_k) ||^2  + 2\omega_{k+1} \mathbb{E} \langle E_k, \tilde{H}(\boldsymbol{\beta_{k+1}}, G_k)  \rangle
		\end{equation}
		For the third term in \eqref{eq:thm-error}, we have
		\begin{align*} 
		\langle E_k, \tilde{H}(\boldsymbol{\beta_{k+1}}, G_k)  \rangle &\leq \langle E_k, H(\boldsymbol{\beta_{k+1}}, G_k)  + \delta(M,n,\epsilon_k)  \rangle\\
		& \leq \langle E_k, h(G_k) + \mu_{G_k}(\boldsymbol \beta_{k+1}) - \Pi_{G_k} \mu_{G_k}(\boldsymbol \beta_{k+1}) +\delta(M,n,\epsilon_k)  \rangle\\
		&\leq -||E_k||^2 +   \langle E_k, \mu_{G_k}(\boldsymbol \beta_{k+1}) - \Pi_{G_k} \mu_{G_k}(\boldsymbol \beta_{k}) \rangle  +  \langle E_k,\Pi_{G_k} \mu_{G_k}(\boldsymbol \beta_{k})- \Pi_{G_{k-1}} \mu_{G_{k-1}}(\boldsymbol \beta_k) \rangle  \\
		& \;\;\;  +  \langle E_k, \Pi_{G_{k-1}} \mu_{G_{k-1}}(\boldsymbol \beta_k)- \Pi_{G_k} \mu_{G_k}(\boldsymbol \beta_{k+1}) \rangle  +   ||E_k|| ||\delta(M,n,\epsilon_k)|| \\
		&:= -||E_k||^2 + (\text{\rom{1}}) + (\text{\rom{2}}) + (\text{\rom{3}})+   ||E_k|| \triangle_k ,
		\end{align*}
		where we use Lemma \ref{lemma:stability}, Assumption \ref{assumption:poisson}, and Cauchy-Schwarz in the second last step, and $||\delta(M,n,\epsilon_k)|| =  \triangle_k$.
		 
		 For (\rom{1}), we have $\mathbb{E} [ \mu_{G_k}(\boldsymbol \beta_{k+1}) - \Pi_{G_k} \mu_{G_k}(\boldsymbol \beta_{k}) | \mathcal{F}_k] = 0$, where $\mathcal{F}_k$ is a $\sigma$-filter formed by $\{ G_0, \boldsymbol \beta_1,G_1, \cdots, \boldsymbol \beta_k,G_k\}$.
		 
		 For (\rom{2}), by Assumption \ref{assumption:poisson}
		  \begin{equation}\label{eq:thm_rom2}
		\mathbb{E}  \langle E_k,\Pi_{G_k} \mu_{G_k}(\boldsymbol \beta_{k})- \Pi_{G_{k-1}} \mu_{G_{k-1}}(\boldsymbol \beta_k) \rangle  \leq C ||E_k|| || G_k-G_{k-1}|| \leq 4CQ^2 \omega_k  \leq 5CQ^2 \omega_{k+1}, 
		 \end{equation}
		where we use the fact that $|| G_k-G_{k-1}|| = || \omega_k  \tilde{H}(\boldsymbol{\beta_{k}}, G_{k-1})||  \leq  2  Q\omega_k $, and the last inequality in \eqref{eq:thm_rom2} use the assumption on the step size for a sufficient large number $k$.
		
		For (\rom{3}), by Assumption \ref{assumption:poisson}
		\begin{align*}
		&\langle E_k, \Pi_{G_{k-1}} \mu_{G_{k-1}}(\boldsymbol \beta_k)- \Pi_{G_k} \mu_{G_k}(\boldsymbol \beta_{k+1}) \rangle  = z_k - z_{k+1} +  \langle  E_{k+1} -E_k , \Pi_{G_k} \mu_{G_k}(\boldsymbol \beta_{k+1}) \rangle \\
		& \leq z_k - z_{k+1} + C|| E_{k+1} -E_k||   = z_k - z_{k+1} + C|| G_{k+1} -G_k||  \leq  z_k - z_{k+1} + 2CQ \omega_{k+1}
		\end{align*}
		where $z_k = \langle E_k, \Pi_{G_{k-1}} \mu_{G_{k-1}}(\boldsymbol \beta_{k}) \rangle $, $z_{k+1} = \langle E_{k+1}, \Pi_{G_k} \mu_{G_k}(\boldsymbol \beta_{k+1}) \rangle$.
		
		 Thus,
		 	\begin{equation*}
			\mathbb{E} ||E_{k+1}||^2 \leq (1-2\omega_{k+1} +Q\omega_{k+1}^2 )	\mathbb{E} ||E_k||^2 +  14CQ\omega_{k+1}^2 + 4Q \triangle_k \omega_{k+1}   + 2\omega_{k+1 }	\mathbb{E} [z_k - z_{k+1}]\rangle
		 \end{equation*}
		 
		 According to Lemma \ref{lemma:sequence}, there exists $\lambda_0$, $k_0$ such that
		 \begin{equation*}
		 \mathbb{E} ||E_{k_0}||^2 \leq \psi_{k_0} = \lambda_0 \omega_{k_0} + 2Q \sup_{i\geq k_0}  \triangle_i
		 \end{equation*}
		 Thus,
		 \begin{equation} \label{eq:thm_Ekbd}
		 \mathbb{E} ||E_{k}||^2 \leq \psi_{k} +  \mathbb{E}[\sum_{j=k_0+1}^k \Lambda_j^k (z_{j+1} - z_j)] 
		 \end{equation}
		 From Assumption \ref{assumption:poisson} and Lemma \ref{lemma:unifrom_G}, we have 
		  \begin{equation*}
		 \mathbb{E}[|z_k|]  = \mathbb{E}\left[\left|  \langle E_k, \Pi_{G_{k-1}} \mu_{G_{k-1}}(\boldsymbol \beta_{k}) \rangle  \right| \right] \leq  \mathbb{E} ||E_k||  \mathbb{E}\left[\left|  \Pi_{G_{k-1}} \mu_{G_{k-1}}(\boldsymbol \beta_{k})   \right| \right] \leq 2QC
		 \end{equation*}
		 
		 By Lemma \ref{lemma:Lamda}, 
		 \begin{align*}
		   \mathbb{E}\left[ \left| \sum_{j=k_0+1}^k \Lambda_j^k (z_{j+1} - z_j)  \right| \right] &=  \mathbb{E}\left[ \left| \sum_{j=k_0+1}^{k-1}  ( \Lambda_{j+1}^k - \Lambda_j^k) z_j +\Lambda_{k_0+1}^k  z_{k_0} -\Lambda_k^k  z_k \right| \right]  \\
		   &\leq  ( \Lambda_{k}^k - \Lambda_{k_0+1}^k) 2QC + 8QC \omega_k \leq 12QC \omega_k
		 \end{align*}
		 
Then the inequality \eqref{eq:thm_Ekbd} can be further bounded as
		 \begin{align*}
\mathbb{E} ||E_{k}||^2& \leq \lambda_0 \omega_k + 2Q \sup_{i \geq k_0}  \triangle_i + 12 QC  \omega_{k} \\
&= \lambda \omega_k + 2Q \sup_{i \geq k_0}  \triangle_i
\end{align*}
where $\lambda = \lambda_0 + 12 QC$.
	\end{proof}


\subsection{Weak convergence of model parameters}

Given a metric tensor $G(\boldsymbol {\beta }(t))$ on the manifold, the Langevin diffusion is characterized by
\begin{equation}\label{eq:langevin_manifold}
d \boldsymbol {\beta (t)} =    G(\boldsymbol {\beta } (t)) \left[  \nabla_{\boldsymbol \beta} {L} (\boldsymbol {\beta }  (t)) + \Gamma(\boldsymbol {\beta } (t))  \right] +  G^{\frac{1}{2}} (\boldsymbol {\beta } (t)) d \mathcal{B}_t
\end{equation}
where $\mathcal{B}_t$ is the standard Brownian motion.

Let $\mathcal{L}$ be the generator for \eqref{eq:langevin_manifold}, for any function $f$ which is compactly supported and twice differentiable,
\begin{equation}\label{eq:cont_generator}
\mathcal{L} f(\boldsymbol {\beta}(t) ) = \left( G(\boldsymbol {\beta}(t) ) \left[ \nabla_{\boldsymbol \beta} {L} (\boldsymbol {\beta }(t)  ) + \Gamma(\boldsymbol {\beta } (t))  \right]  \cdot \nabla_{\boldsymbol \beta}  +  \frac{1}{2} G^{\frac{1}{2}}(\boldsymbol {\beta}) G^{\frac{1}{2}}(\boldsymbol {\beta })^T : \nabla_{\boldsymbol \beta} \nabla_{\boldsymbol \beta}^T \right) f(\boldsymbol {\beta}(t) ) 
\end{equation}
where $\cdot$ denote the vector dot product, and $:$ denote the matrix double dot product, and generator $\mathcal{L}$ is associated with the backward Kolmogorov equation 
\begin{equation*} 
\mathbb{E} [f(\boldsymbol {\beta } (t) )] = e^{t\mathcal{L}} f(\boldsymbol {\beta}_0 )
\end{equation*}

In our work, we define the true generator using $G_*$ as 
\begin{equation}\label{eq:true_generator}
\mathcal{L}_* = G_*  \nabla_{\boldsymbol \beta} {L} (\boldsymbol {\beta }(t) )   \cdot \nabla_{\boldsymbol \beta}  +  \frac{1}{2} G_*  : \nabla_{\boldsymbol \beta} \nabla_{\boldsymbol \beta}^T  
\end{equation}
Given a test function $\phi$ of interest, let $\bar{\phi}$ be the posterior average of $\phi$ under the invariant measure of the associate SDE of \eqref{eq:true_generator}. Let $\boldsymbol {\beta}_{k}$ be numerical samples, and define 
$\hat{\phi} = \sum_{k=1}^{K} \frac{\epsilon_k}{S_K} \phi(\boldsymbol {\beta}_{k} )$, where $S_K = \sum_{k=1}^{K} \epsilon_k$. Let $\psi$ be a functional which solves the following Poisson equation 
\begin{equation*}
\mathcal{L}_* \psi(\boldsymbol {\beta}_{k}) = \phi(\boldsymbol {\beta}_{k}) - \bar{\phi}.
\end{equation*}
The solution functional characterize the difference between the posterior average and $\phi(\boldsymbol {\beta}_{k})$ for every $\boldsymbol {\beta}_{k}$. The assumption of $\psi$ is described as follows, which is the same as in \cite{sg-mcmc-convergence}.
\begin{assumption} \label{assump:psi}
	The functional $\psi$, and its derivatives $\mathcal{D}^j \psi$ ($j=1,2,3$), are bounded by a function $\mathcal{V}$. That is $||\mathcal{D}^j \psi|| \leq C_j \mathcal{V}^{p_j}$($j=0,1,2,3$), for some positive constants $C_j$ and $p_j$. Furthermore, $\mathcal{V}$ satisfies $\sup_k \mathbb{E}(\mathcal{V}(\boldsymbol {\beta}_{k})) < \infty$, and is smooth such that 
	\[
	 \sup_{s\in(0,1)} \mathcal{V}^p \left(s \boldsymbol {\beta} + (1-s) \boldsymbol {\gamma} \right)  \leq C \left(\mathcal{V}^p(\boldsymbol {\beta} ) +  \mathcal{V}^p (\boldsymbol {\gamma} ) \right)
	 \], 
	 $\forall \boldsymbol {\beta},  \boldsymbol {\gamma} $, and $p\leq \max \{2p_k\}$, $C>0$.
\end{assumption}

Next, we write the local integrator of our proposed method $\tilde{\mathcal{L}}_t$ as
\begin{equation}
\tilde{\mathcal{L}}_k = G(\boldsymbol {\beta}_k ) \left(\nabla_{\boldsymbol \beta} \tilde{L} (\boldsymbol {\beta}_k)  \right)  \cdot \nabla_{\boldsymbol \beta}  +  \frac{1}{2} G(\boldsymbol {\beta}_k) : \nabla_{\boldsymbol \beta} \nabla_{\boldsymbol \beta}^T
\end{equation}

Then $\tilde{\mathcal{L}}_k  =  \mathcal{L}_* + \Delta V_k$, with 
\begin{equation*}
\Delta V_k =   \left( G(\boldsymbol {\beta}_k) - G_* \right) \nabla_{\boldsymbol \beta} {L} (\boldsymbol {\beta}_k)  \cdot \nabla_{\boldsymbol \beta} +  \left( G(\boldsymbol {\beta}_k) - G_* \right) \xi_k \cdot \nabla_{\boldsymbol \beta}  + \frac{1}{2} \text{tr} \left[ (  G(\boldsymbol {\beta}_k) - G_*)^T \nabla_{\boldsymbol \beta}\nabla_{\boldsymbol \beta}^T \right]
\end{equation*}
where $\xi_k$ is the stochastic noise which comes from $\nabla_{\boldsymbol \beta} \tilde{L} (\boldsymbol {\beta}_k )-\nabla_{\boldsymbol \beta} {L} (\boldsymbol {\beta}_k)$.

We now state the estimates for the bias and MSE.
\begin{theorem}
	Under Assumptions \ref{assump:psi}, the bias and MSE of HAMCMC-SA for $K$ steps with decreasing step size $\epsilon_k$ is bounded,
	\begin{equation*}
	\text{Bias:   }   |\mathbb{E} \hat{\phi} - \bar{\phi} | =  O\Bigg( \frac{1}{S_K} 
	+  \sum_{k=1}^{K} \frac{\lambda \omega_k \epsilon_k}{S_K}  +  \sum_{k=1}^{K}  \frac{\epsilon_k^2 }{S_K}  + 2Q \sup_{i \geq k_0} \triangle_i \Bigg)
	\end{equation*}
	
\end{theorem}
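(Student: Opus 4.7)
The plan is to adapt the Poisson-equation / telescoping framework of \cite{sg-mcmc-convergence} (as hinted by Assumption \ref{assump:psi}) to the present setting, reducing the bias to (i) a discretization piece handled by Taylor expansion of $\psi$, and (ii) a preconditioner-perturbation piece that is controlled by Theorem \ref{thm:main_G}. Concretely, I would start from $\mathcal{L}_* \psi(\boldsymbol\beta_k) = \phi(\boldsymbol\beta_k) - \bar\phi$ and write
\begin{equation*}
\mathbb{E}\hat\phi - \bar\phi \;=\; \sum_{k=1}^K \frac{\epsilon_k}{S_K}\,\mathbb{E}\!\left[\mathcal{L}_* \psi(\boldsymbol\beta_k)\right] \;=\; \sum_{k=1}^K \frac{\epsilon_k}{S_K}\,\mathbb{E}\!\left[\tilde{\mathcal{L}}_k \psi(\boldsymbol\beta_k)\right] - \sum_{k=1}^K \frac{\epsilon_k}{S_K}\,\mathbb{E}\!\left[\Delta V_k\, \psi(\boldsymbol\beta_k)\right],
\end{equation*}
using the identity $\mathcal{L}_* = \tilde{\mathcal{L}}_k - \Delta V_k$ stated just before the theorem.

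For the first (local-integrator) sum, I would Taylor-expand $\psi(\boldsymbol\beta_{k+1})$ around $\boldsymbol\beta_k$ through second order along the update rule of Algorithm \ref{alg:hSGLD-SA}. The leading term is precisely $\psi(\boldsymbol\beta_k) + \epsilon_k\,\tilde{\mathcal{L}}_k\psi(\boldsymbol\beta_k)$, and the remainder is bounded by $C\epsilon_k^2 \mathcal{V}^p(\boldsymbol\beta_k)$ using Assumption \ref{assump:psi} on the derivatives $\mathcal{D}^j\psi$ together with the uniform eigenvalue bounds of Lemma \ref{lemma:eigen_bd} on $G_k$. Solving for $\tilde{\mathcal{L}}_k\psi(\boldsymbol\beta_k)$, substituting, and summing produces a telescoping sum whose boundary contribution is $(\mathbb{E}\psi(\boldsymbol\beta_{K+1}) - \psi(\boldsymbol\beta_1))/S_K = O(1/S_K)$ by the moment bound $\sup_k \mathbb{E}\mathcal{V}(\boldsymbol\beta_k) < \infty$, together with the higher-order residual $O(\sum_k \epsilon_k^2/S_K)$. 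This delivers the first and third terms of the claimed bound.

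For the second (perturbation) sum, I would exploit the explicit form of $\Delta V_k$. The stochastic-gradient piece $(G_k-G_*)\xi_k \cdot \nabla$ vanishes in expectation conditional on $\mathcal{F}_k$ since $\mathbb{E}[\xi_k\mid \mathcal{F}_k]=0$. The remaining two terms are linear in $G_k - G_*$, multiplied by quantities bounded via Assumption \ref{assump:psi} (derivatives of $\psi$) and the usual control on $\nabla L(\boldsymbol\beta_k)$, so that $|\mathbb{E}[\Delta V_k \psi(\boldsymbol\beta_k)]| \leq C\,\mathbb{E}\|G_k - G_*\|$. Applying Jensen to Theorem \ref{thm:main_G} gives $\mathbb{E}\|G_k-G_*\| = O(\lambda\omega_k + \sup_{i\geq k_0}\triangle_i)$ (absorbing the square root into constants, consistent with how the statement is written), and summing against $\epsilon_k/S_K$ produces the $\sum_k \lambda\omega_k\epsilon_k/S_K$ term and, using $\sum_k\epsilon_k/S_K = 1$, the additive $2Q\sup_{i\geq k_0}\triangle_i$ term.

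The main obstacle I anticipate is the Taylor-expansion step: Algorithm \ref{alg:hSGLD-SA} normalizes the drift and noise as $\xi_k/\|G_k g_k\|$ and $\eta_k/\|S_k z_k\|$, which does not literally match the drift $G_k \nabla\tilde L$ and volatility $\sqrt{G_k}$ appearing in $\tilde{\mathcal{L}}_k$ defined above the theorem. To push the argument through cleanly, one must either (a) reinterpret the local generator to match the normalized scheme or (b) show that the discrepancy contributes at order $\epsilon_k^2$ and can be absorbed into the existing remainder, the latter requiring uniform lower/upper moment bounds on $\|G_k g_k\|$ and $\|S_k z_k\|$ via Lemma \ref{lemma:eigen_bd}. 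A secondary technical point is verifying that the conditional-mean cancellation of $\xi_k$ survives coupling with the $G_k - G_*$ factor; since $G_k$ is measurable with respect to $\mathcal{F}_k$ whereas $\xi_k$ is the fresh minibatch noise at step $k$, this cancellation goes through, but it should be stated explicitly.
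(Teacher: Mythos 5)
Your proposal follows essentially the same route as the paper: the Poisson-equation/telescoping decomposition borrowed from the cited SG-MCMC convergence framework produces the $O(1/S_K)$ boundary term and the $O(\sum_k \epsilon_k^2/S_K)$ discretization remainder, and the perturbation sum $\sum_k (\epsilon_k/S_K)\,\mathbb{E}[\Delta V_k\,\psi]$ is handled exactly as you describe --- unbiasedness of the stochastic gradient removes the $\xi_k$ contribution, Assumption~\ref{assump:psi} bounds the derivatives of $\psi$, and Theorem~\ref{thm:main_G} controls $\mathbb{E}\|G_k-G_*\|$, yielding the $\sum_k \lambda\omega_k\epsilon_k/S_K$ and $2Q\sup_{i\ge k_0}\triangle_i$ terms. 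The two caveats you flag (the normalized drift and noise in Algorithm~\ref{alg:hSGLD-SA} not literally matching the generator $\tilde{\mathcal{L}}_k$, and the square root that Jensen's inequality would introduce when passing from the second-moment bound of Theorem~\ref{thm:main_G} to $\mathbb{E}\|G_k-G_*\|$) are both elided by the paper as well, so your treatment is, if anything, the more careful one.
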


\begin{proof}
Following a similar proof as in \cite{sg-mcmc-convergence}, one can obtain the following:
\begin{equation}\label{eq:diff_post_average}
\hat{\phi} - \bar{\phi}  =  \frac{1}{S_K} \left( \mathbb{E} \psi(\boldsymbol {\beta}_{L} ) - \psi(\boldsymbol {\beta}_{0} ) \right)
+  \frac{1}{S_K} \sum_{k=1}^{K-1}  \left( \mathbb{E} \psi(\boldsymbol {\beta}_{k} ) - \psi(\boldsymbol {\beta}_{k} )\right)
-  \sum_{k=1}^{K} \frac{\epsilon_k}{S_K} \Delta V_k  \psi(\boldsymbol {\beta}_{k-1} ) + C \frac{\epsilon_k^2 }{S_K}
\end{equation}
Taking expectation on both sides of \eqref{eq:diff_post_average}, 
\begin{equation}\label{eq:pre_bias}
\left| \mathbb{E} \hat{\phi} - \bar{\phi}  \right| \leq  \frac{1}{S_K} \mathbb{E} \left|  \psi(\boldsymbol {\beta}_{K} ) - \psi(\boldsymbol {\beta}_{0} ) \right|
+  \sum_{k=1}^{K} \frac{\epsilon_k}{S_K}  \left|  \mathbb{E}[ \Delta V_k  \psi(\boldsymbol {\beta}_{k-1} ) ]  \right|+ C \sum_{k=1}^{K} \frac{\epsilon_k^2 }{S_K}
\end{equation}

For the third term in the above equation,
\begin{align} \label{eq:deltaV_bound}
&\left|\mathbb{E} \left[ \Delta V_k  \psi(\boldsymbol {\beta}_{k-1}) \right] \ \right|  \\
 \leq  &\left| \mathbb{E}   \langle  \left( G(\boldsymbol {\beta}_k) - G_* \right) \nabla_{\boldsymbol \beta} {K} (\boldsymbol {\beta}_k),  \nabla_{\boldsymbol \beta}   \psi(\boldsymbol {\beta}_{k-1} ) \rangle \right| 
 + \frac{1}{2}  || G(\boldsymbol {\beta}_k) - G_* || ||\mathbb{E}   \Delta  \psi(\boldsymbol {\beta}_{k-1})|| 
\end{align}
where we use the fact that $\nabla_{\boldsymbol \beta} \tilde{L} (\boldsymbol {\beta}_k )$ is an unbiased estimator of $\nabla_{\boldsymbol \beta} {L} (\boldsymbol {\beta}_k)$, and $\text{tr}(AB) =||AB||_F \leq ||A||_F ||A||_F$ where $||\cdot||_F$ is the Frobenius norm and is abbreviate for $||\cdot||$.

According to Assumption \ref{assump:psi}, we have derivatives $\psi(\boldsymbol {\beta}_{k-1})$ are bounded, 
\[ \langle  \left( G(\boldsymbol {\beta}_k) - G_* \right) \nabla_{\boldsymbol \beta} {L} (\boldsymbol {\beta}_k),  \nabla_{\boldsymbol \beta}   \psi(\boldsymbol {\beta}_{k-1} ) \rangle  \leq C || G(\boldsymbol {\beta}_k) - G_* ||
\]
for some positive constant $C$, since $\nabla_{\boldsymbol \beta} {L} (\boldsymbol {\beta}_k)$ is also bounded.

By Theorem \ref{thm:main_G}, \eqref{eq:deltaV_bound} can be further bounded 
\begin{equation*}
\left|\mathbb{E} \left[ \Delta V_k  \psi(\boldsymbol {\beta}_{k-1}) \right] \ \right|  \leq C \mathbb{E} || G(\boldsymbol {\beta}_k) - G_* ||  \leq C(\lambda \omega_k + 2Q \sup_{i \geq k_0} \triangle_i)
\end{equation*}

Thus, 
\begin{align*}
\left| \mathbb{E} \hat{\phi} - \bar{\phi}  \right| &=  O\Bigg( \frac{1}{S_K} 
+  \sum_{k=1}^{K} \frac{\epsilon_k}{S_K} (\lambda \omega_k + 2Q \sup_{i \geq k_0} \triangle_i) +  \sum_{k=1}^{K}  \frac{\epsilon_k^2 }{S_K}  \Bigg)\\
&=  O\Bigg( \frac{1}{S_K} 
+  \sum_{k=1}^{K} \frac{\lambda \omega_k \epsilon_k}{S_K}  +  \sum_{k=1}^{K}  \frac{\epsilon_k^2 }{S_K}  + 2Q \sup_{i \geq k_0} \triangle_i \Bigg)
\end{align*}
where $\omega_k = \mathcal{O}(k^{-\alpha})$. As $K \rightarrow \infty$, $\left| \mathbb{E} \hat{\phi} - \bar{\phi}  \right| \rightarrow 2Q\sup_{i \geq k_0} \triangle_i $, which is a controllable bias.

As for the MSE, we following a similar proof as in \cite{sg-mcmc-convergence}, as long as $\sup_k \mathbb{E} || \Delta V_k  \psi(\boldsymbol {\beta}_{k-1}) ||^2$ is bounded, which is obvious, we have as $K \rightarrow \infty$, $\mathbb{E} \left( \hat{\phi} - \bar{\phi}  \right)^2  \rightarrow 0$.

\end{proof}


\section{Numerical examples}\label{sec:numerical}
In the last section, we will perform several numerical tests using proposed algorithm. 
\subsection{2D Gaussian distribution}
We first consider a simple 2D Gaussian distribution $\mathcal{N}(\mu, \Sigma)$ for a simple illustration, where $\displaystyle{\mu = (0,0)^T}$, $\displaystyle{\Sigma =  \begin{bmatrix}
\sigma_x^2& -0.95\sigma_x \sigma_y\\
-0.95\sigma_x \sigma_y\ & \sigma_y^2
\end{bmatrix}}$ and $\sigma_x = 0.12, \sigma_y = 1$. In such a case, the two random variables have different scales of uncertainty and are correlated. Given some posterior samples, we aim to estimate the covariance matrix. We compare the proposed method HASGLD-SA with vanilla SGLD. In Figure \ref{fig:2dgaussian} (a), we show the first 2500 samples generated from both methods, where we set burn-in to be 500. The contour of the true posterior is shown in the background. It shows that HASGLD-SA can explore the posterior better. In Figure \ref{fig:2dgaussian} (b), we compare two methods using different step sizes $\{0.02, 0.02\times0.8, 0.02\times0.8^2, 0.02\times0.8^4 \}$, and 30,000 samples are generated in each case. The  average absolute error of sample covariance vs autocorrelation time (ACT) are plotted. We can see that HASGLD-SA outperforms SGLD by showing a lower error and smaller autocorrelation time.

\begin{figure}[!h] 
	\captionsetup[subfigure]{justification=centering}
	\begin{subfigure}{.7\textwidth}
		\centering
		\includegraphics[scale=0.37]{./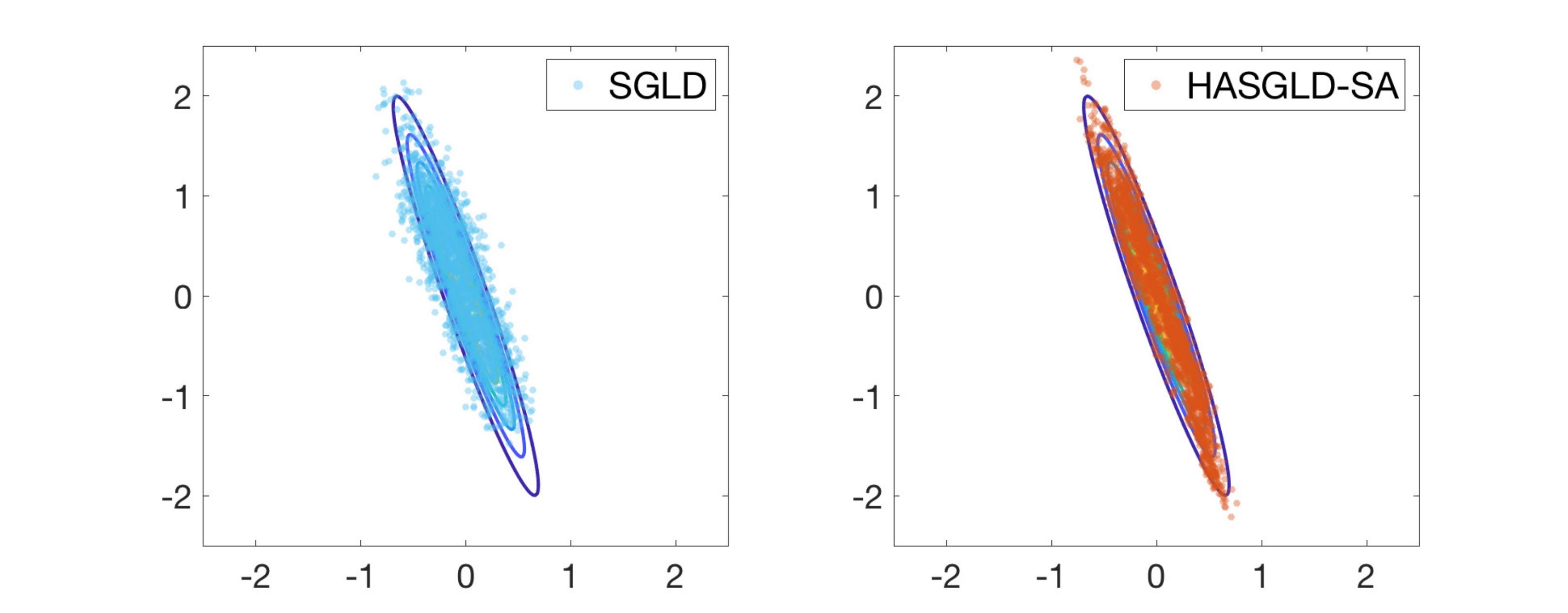}
		\caption{Samples obtained from SGLD and HASGLD-SA}
	\end{subfigure}\hfill
	\begin{subfigure}{.3\textwidth}
		\centering
		\includegraphics[scale=0.53]{./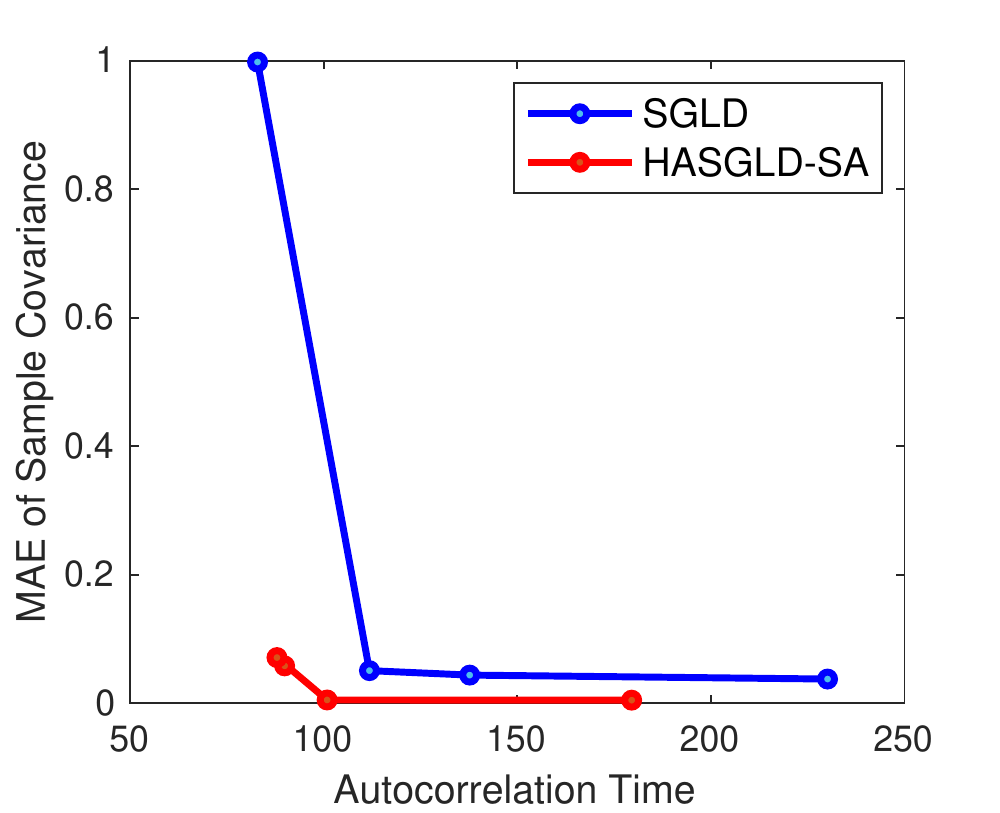}
		\caption{Covariance error and ACT}
	\end{subfigure}\hfill 
	\caption{2D Gaussian distribution. Subfigure (a) shows the comparison of samples obtained from SGLD and HASGLD-SA. Subfigure (b) shows the covariance error and autocorrelation time comparison between two methods.} \label{fig:2dgaussian}
\end{figure}

\subsection{Small $n$ large $p$ problem}

We then test on a linear regression problem with $n$ observations and $p$ model parameters, where $n<<p$. Let the model parameters be $\boldsymbol\beta \in \mathbb{R}^p$, $\beta_1 = 3, \beta_2 = 1, \beta_j = 0$, for $j=1, \cdots, p$. Denote by $X \in  \mathbb{R}^{n\times p}$ the predictors, which is generated from $\mathcal{N}_p(0, \Sigma)$ with $\Sigma_{ij} = 0.8^{\frac{1}{4}|i-j|}$.The responses $y = X \boldsymbol \beta + \epsilon$, and $\epsilon \sim \mathcal{N}_n(0, 3I_n)$.  In this example, we take $n = 100$ and $p = 200$.  We compare the performance of SGLD-SA and HASGLD-SA and present them in Figure \ref{fig:small-n-large-p}. We remark that, in this example, we assume the model parameter ${\boldsymbol\beta}_{j}$ follows a spike and slab Gaussian-Laplace prior in order to perform sparse inference. That is, ${\boldsymbol\beta}_{j}| \sigma^2, \gamma_j \sim  \gamma_j \mathcal{N} (0, \sigma^2 v_1) + (1-\gamma_j) \mathcal{L} (0, \sigma v_0)$, where $\gamma_j = \{0,1\}$. Similar as in \cite{sgld-sa}, the hyper-parameters priors are $\sigma \sim IG(\nu/2, \nu \lambda/2)$, $\pi(\gamma_j|\delta_j) = \delta_j^{|\gamma_j|}  (1-\delta_j)^{p_j-|\gamma_j|}$, and $\pi(\delta_j) = \delta_j^{a-1} (1-\delta_j)^{b-1}$. The priors will be learned through optimization. We choose $\nu=1, \lambda=1, v_1 = 100, v_0=0.1, \delta = 0.5, a=1, b=p$, and the step size for updating hyper-parameters in the priors to be $\omega_k =  5\times (10 + k)^{-0.9}$. The learning rate is chosen to be $0.1$. The comparison of posterior mean $\hat{\boldsymbol \beta}$ and true $\boldsymbol\beta$ is shown in the left subplot of Figure \ref{fig:small-n-large-p}. It shows that HASGLD-SA identifies the model parameters better. Moreover, for testing purposes, we generate 50 new samples, and use the estimated posterior mean in each step to perform a prediction. Then we compute the mean MSE and MAE error of the predicted responses with true responses among these testing samples, and show the results in Figure \ref{fig:small-n-large-p}. We observe that HASGLD-SA has consistently smaller errors during this process.

\begin{figure}[!hbt]
	\centering
	\includegraphics[scale=0.35]{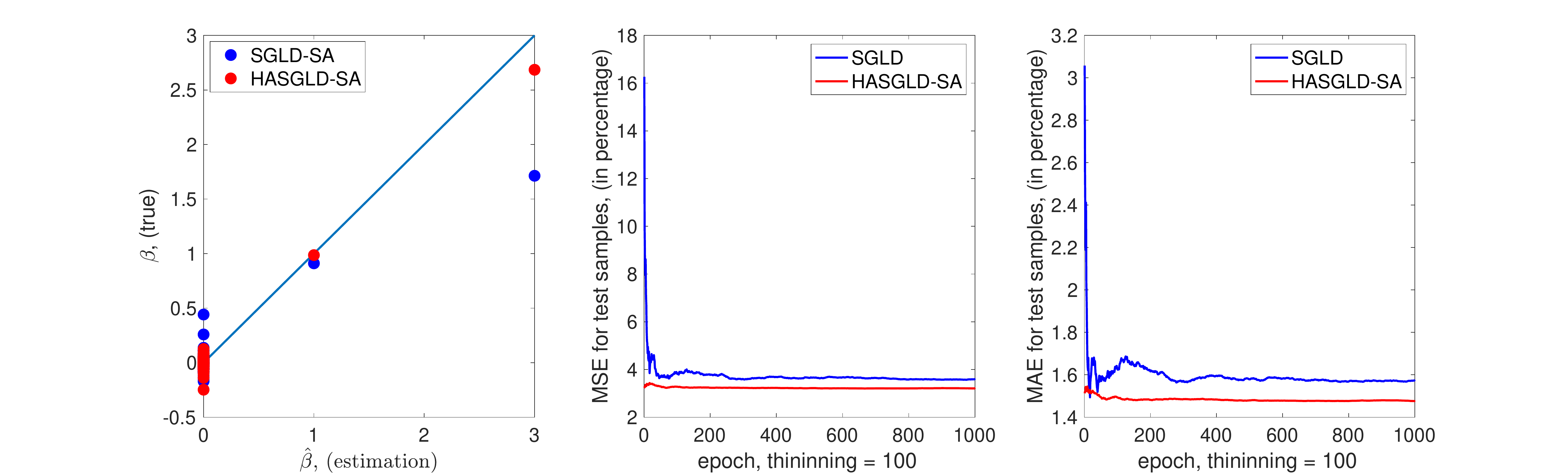}
	\caption{A comparison between three methods for large-p-small-n problem.}
	\label{fig:small-n-large-p}
\end{figure}

\subsection{Solutions of Elliptic PDE}\label{sec:num_ex2}
Next, we apply the proposed approaches to predict solutions the elliptic problem with heterogeneous permeability fields. The mixed formulation of the elliptic problem reads:
\begin{align*}
\kappa^{-1} u + \nabla p &= 0 \quad \quad \text{in}  \quad \Omega\\
\text{div} (u) &= f \quad \quad \text{in}  \quad \Omega
\end{align*}
where $\kappa$ represents permeability, $f$ is the source.
The domain$\Omega=[0,1]\times[0,1]$, and the boundary consists of $\partial \Omega = \Gamma_N  \cup \Gamma_D$. Raviart-Thomas element $\text{RT}_0$ and piecewise constant element $P_0$ pairs are chosen to solve the linear system, and the solution vectors will be used as training labels. The mixed finite element system on the fine grid has the matrix form
\begin{equation*}
\begin{bmatrix}
A_h & B_h^T   \\
B_h & 0
\end{bmatrix}
\begin{bmatrix}
u_h \\
p_h
\end{bmatrix} =
\begin{bmatrix}
D_b  \\
-F
\end{bmatrix}
\end{equation*}
where where $[A_h]_{ij} = \int_{\Omega} \kappa^{-1} \psi_i \cdot \psi_j$, and $[B_h]_{ij} = -\int{\Omega} p_k \; \text{div} \psi_j $, where $\psi_j$ is the velocity basis on the $i$-th fine scale edge, $p_k$ is the pressure basis on the $k$-th fine scale block.

It is well known that the multiscale properties of the permeability fields require very fine-scale meshes to recover all scale information. Numerous methods have been proposed to develop reduced-order models to alleviate the computational burden. A popular class of approaches among these includes the mixed multiscale finite element method \cite{ch02, MixedGMsFEM}. The idea is to construct a multiscale velocity basis by solving some local problems on each coarse region and couple them with a mixed formulation. If the underlying permeability has rich information, several multiscale bases are needed to capture these features to provide an accurate approximation. The mixed FEM formulation on the coarse grid level preserves mass conservative property which is essential for flow problems. 

To be specific, denote by $N_u^H$ be dimension of the multiscale velocity solution space, and let $R_u \in \mathbb{R}^{N_u^H \times N_u^h} $ be the matrix with these velocity basis in every row, where $N_u^h$ is the dimension of fine scale velocity solution space. Similarly, denote by $R_p$ the matrix containing piecewise constant basis on coarse grid level which maps fine scale pressure vector in $\mathbb{R}^{N_p^h}$ to coarse scale pressure vector in $\mathbb{R}^{N_p^H}$. The mixed formulation on the coarse grid reads
\begin{equation*}
\begin{bmatrix}
A_H & B_H^T   \\
B_H & 0
\end{bmatrix}
\begin{bmatrix}
u_H \\
p_H
\end{bmatrix} =
\begin{bmatrix}
R_u & 0   \\
0 & R_p
\end{bmatrix}
\begin{bmatrix}
A_h(\kappa) & B_h^T   \\
B_h & 0
\end{bmatrix}
\begin{bmatrix}
R_u^T & 0   \\
0 & R_p^T
\end{bmatrix}
\begin{bmatrix}
u_H \\
p_H
\end{bmatrix}=
\begin{bmatrix}
0   \\
-F_H
\end{bmatrix}
\end{equation*}
One can observe that $\begin{bmatrix}
R_u & 0   \\
0 & R_p
\end{bmatrix}$ performs an upscaling procedure which is analogy to an encoder, and $\begin{bmatrix}
R_u^T & 0   \\
0 & R_p^T
\end{bmatrix}$ acts as downscaling matrix which can be viewed as a decoder.

After one obtains the coarse-scale solution vector $u_H$ from the above system, the multiscale solution $u_{\text{ms}}$ can be recovered using $u_{\text{ms}} = \sum_{i=1}^{N_u^H} (u_H)_i \Psi_i$, where $(u_H)_i$ is the $i$-th component in $u_H$, and $\Psi_i$ is the $i$-th column in $R_u^T$.
To obtain an accurate approximation $u_{\text{ms}}$ to $u_h$, it is crucial to design good local problems and basis selecting algorithms which are used for solving multiscale bases.  Moreover, many practical applications need to solve the flow problem with (1) varying source terms or boundary conditions, given a fixed permeability field, or (2) different permeability fields. In the second case, the multiscale basis needs to be reconstructed every time providing a new $\kappa$. To avoid these technical difficulties, we aim to borrow the upscaling-downscaling idea from coarse grid solvers, and construct an encoding-decoding type of neural network \cite{wang_multiphase} as surrogate models (1) between the source term $f$ and fine grid velocity solution $u_h$, (2) between the permeability fields $\kappa$ and fine grid velocity solution $u_h$. We refer to \cite{wang_multiphase} for the details of the network architecture.

\subsubsection{Varying source term}
we first consider the case when $f$ are different among samples, but the $\kappa$ is a fixed permeability field from SPE10 model. We use a three-spot source term, where the three blocks with nonzero source lie in the center $\omega_c \in \Omega$, the upper right corner $\omega_{up} \in \Omega$ and lower left corner $\omega_{ll} \in \Omega$ of the computational domain. The values of the source is set to be  
\begin{equation*}
f(x) = \begin{cases}
f_1 \sim \displaystyle{\mathcal{N}(10, 5) }, & \text{if } \displaystyle{ x \in \omega_{up} } \\
f_2 \sim \displaystyle{\mathcal{N}(10, 5) }, & \text{if } \displaystyle{ x \in \omega_{ll} } \\
-(f_1+f_2), & \text{if } \displaystyle{ x \in \omega_{c} } \\
0 & \text{otherwise}
\end{cases} 
\end{equation*} 

 An illustration of the permeability field, source term and corresponding velocity solution is shown in Figure \ref{fig:ex_3-1_illustration}.
\begin{figure}[!h]
	\includegraphics[width=.25\textwidth]{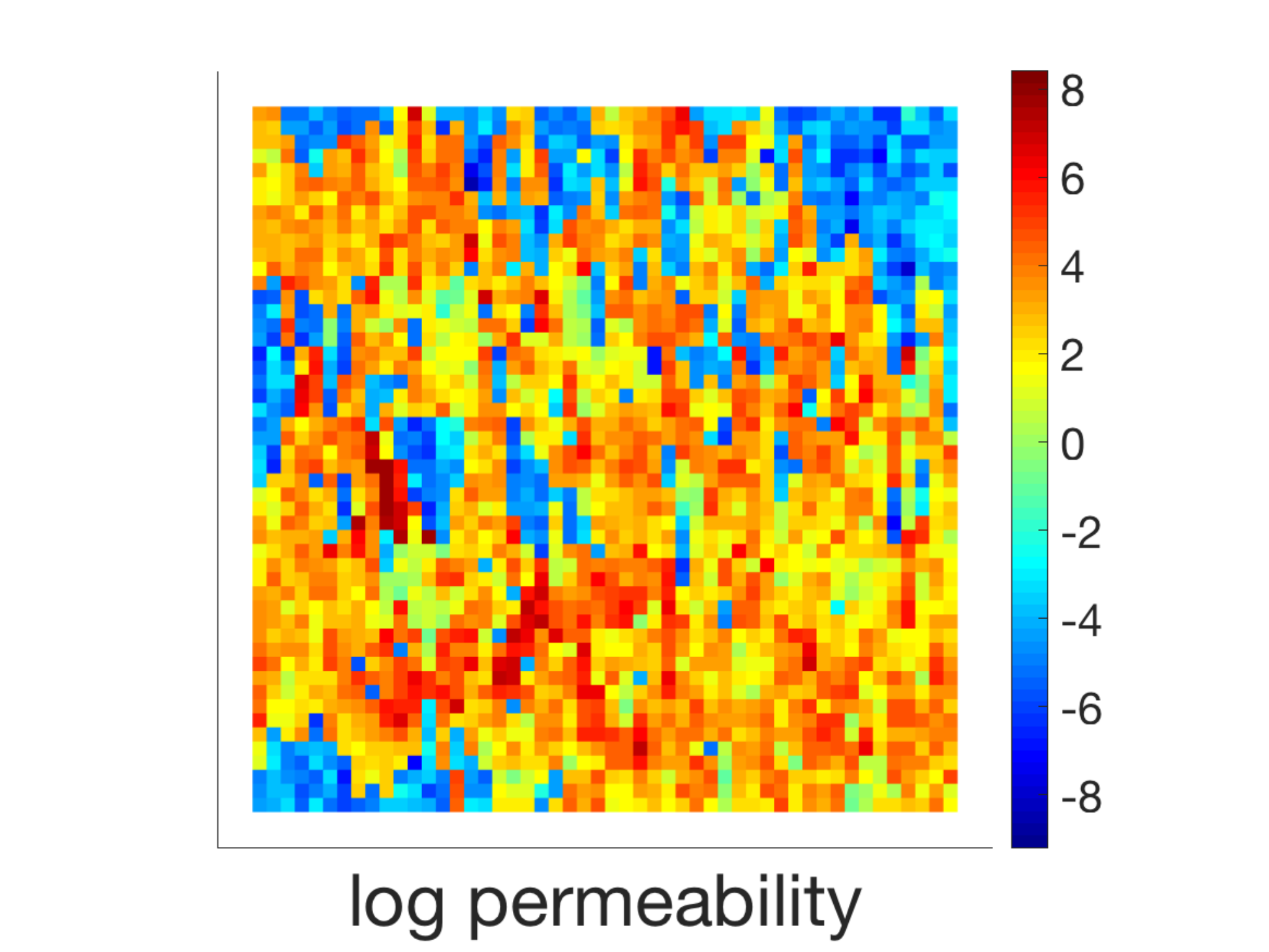}\hfill
	\includegraphics[width=.25\textwidth]{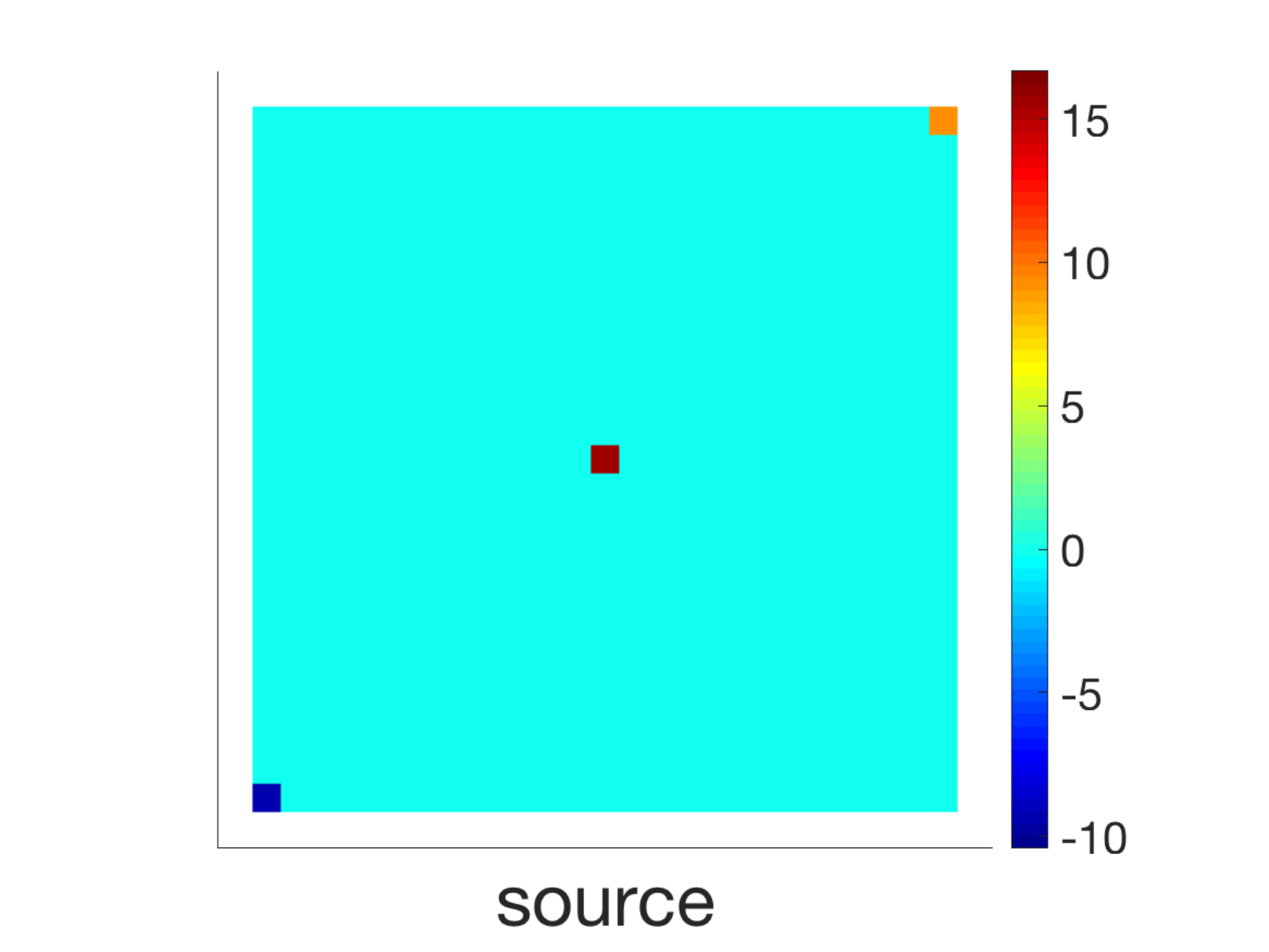}\hfill
	\includegraphics[width=.25\textwidth]{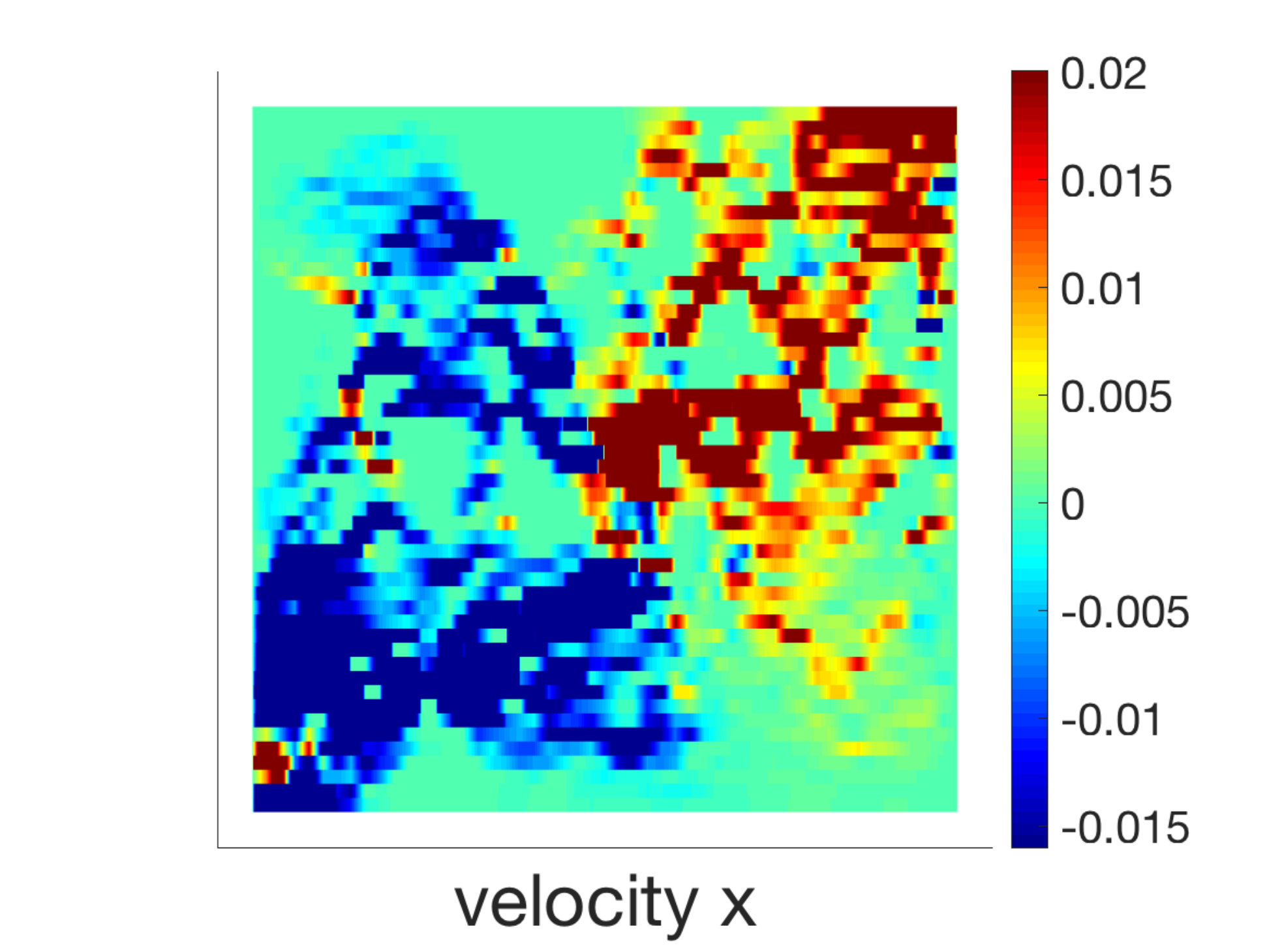}\hfill
    \includegraphics[width=.25\textwidth]{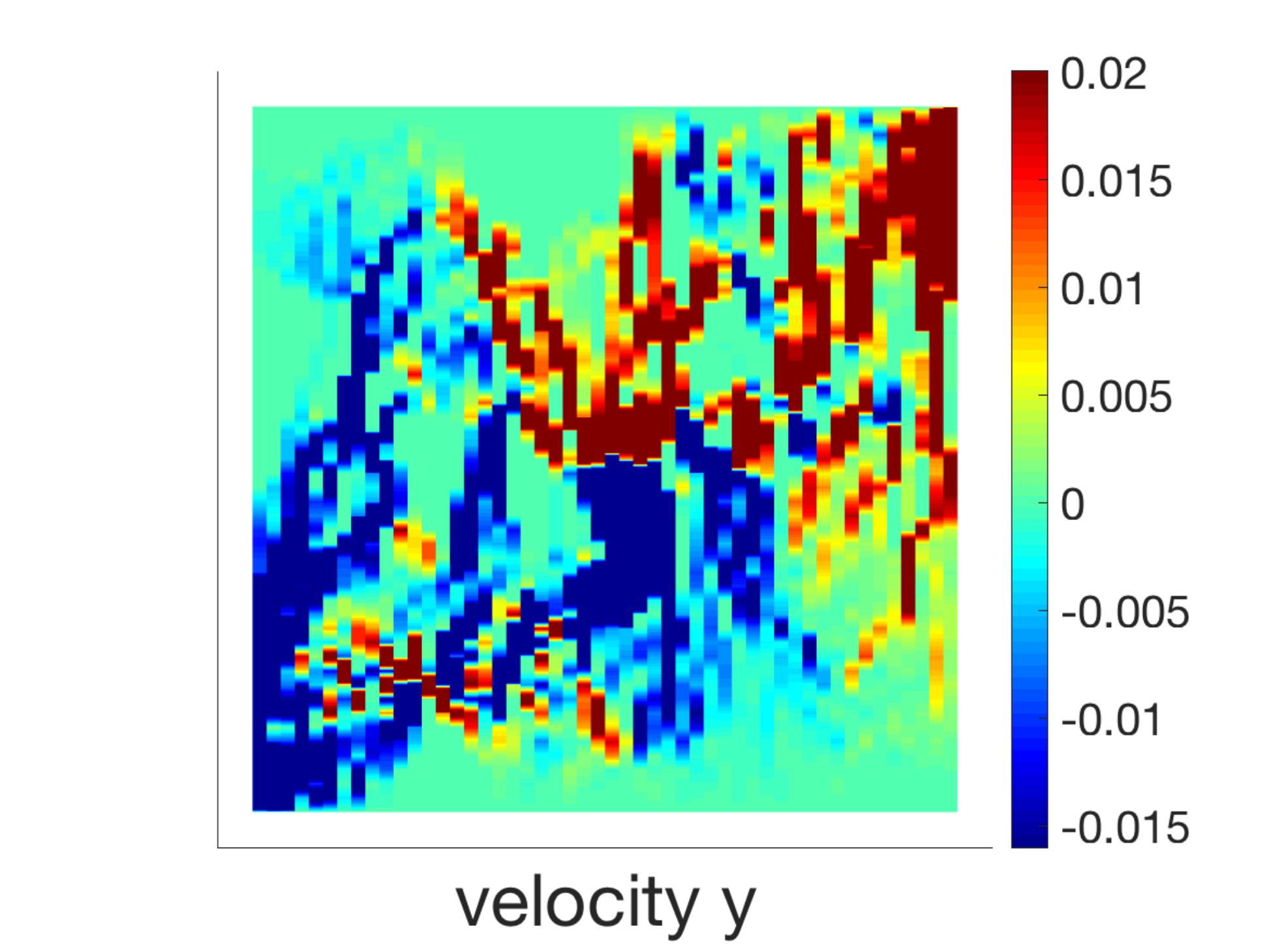}\hfill
	\caption{ From left to right:  The permeability field of SPE10 model (in log scale);  A three-spot source; Velocity solution magnitude in $x$ direction; Velocity solution magnitude in $y$ direction.}
	\label{fig:ex_3-1_illustration}
\end{figure}

We run the simulation for $1500$ different source terms and use the source-velocity pairs to train the neural network $\mathcal{F}$, where $\mathcal{F}(f) \approx u$. $80\%$ of the samples are randomly selected to train the network and the rest $20\%$ will be used for testing. The architecture of the network is as follows. The first layer is an average pooling layer with pool size $2\times 2$, a flatten layer is followed to transform the image into its vector version, then a fully connected layer with $100$ neurons is adopted. This part of the network encodes the input and is in analogy to upscaling. Then we reshape this intermediate output to square images, use another two convolution layers, a flatten layer, and a fully connected layer with $200$ neurons to extract more hidden features. Finally, a dense layer is used to decode the features. The network has $2,566,828$ weight parameters in total.

We use the relative $l_2$ error in the loss function
 \begin{equation*}
{ || u_{i} - \mathcal{F}(f_i) || =  \frac{|| u_{i} - \mathcal{F}(f_i) ||_2 }{ ||u_{i} ||_2} }
\end{equation*}
where $u_i$ is the true velocity solution obtained from mixed FEM solver, $\mathcal{F}(f_i)$ is the neural network prediction for the $i$-th sample. The mean errors for testing are shown in Table \ref{tab:ex_3-1}. We see that with $1$ or $2$ memory size, HASGLD-SA gives smaller errors consistently compared with vanilla SGLD. A few sample comparisons are shown in Figure \ref{fig:ex_3-1}. We remark that these are some bad predictions in the testing set, for other sample predictions, the errors are small and the discrepancies cannot be visualized obviously. We observe that, SGLD predictions lose some features compared with true solution, while  HASGLD-SA captures the heterogeneities in the solution well.

\begin{table}[!htb]
	\centering
	\begin{tabular}{|c  |c  | c  | c | c |}
		\hline
			
		&SGLD &HASGLD-SA (M=1) &HASGLD-SA (M=2)\\ \hline	
		No pruning &2.03 & 0.45 &0.42    \\  \hline
		Pruning Sparse rate 30\%  &1.38 &0.37 &0.34  \\  \hline
		Pruning Sparse rate 50\% &1.25 &0.29 &0.27  \\  \hline
		Pruning Sparse rate 70\%  &1.26 &0.30&0.27 \\  \hline
	\end{tabular}
	\caption{Mean errors (in percentage) for $300$ testing samples among the true and predicted solutions using proposed HASGLD-SA with memory size $M=1$, $M=2$, and SGLD.}\label{tab:ex_3-1}
\end{table}

	\begin{figure}[!hbt]

	\begin{subfigure}[t]{1.0\textwidth}
	\centering
	\includegraphics[width=0.8\textwidth]{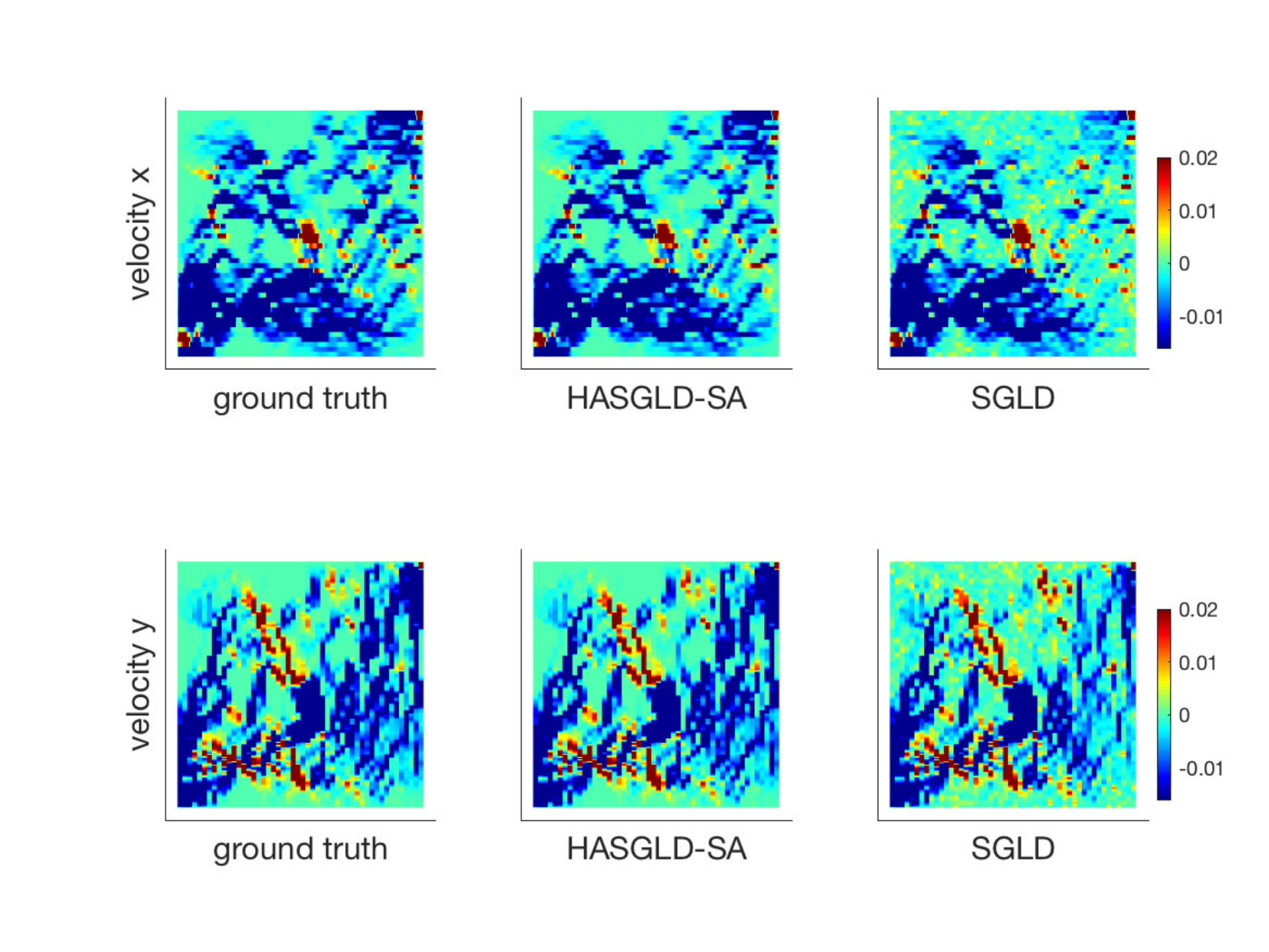}
		\caption{Test case 1}
	\end{subfigure}
	\begin{subfigure}[t]{1.0\textwidth}
	\centering
	\includegraphics[width=0.8\textwidth]{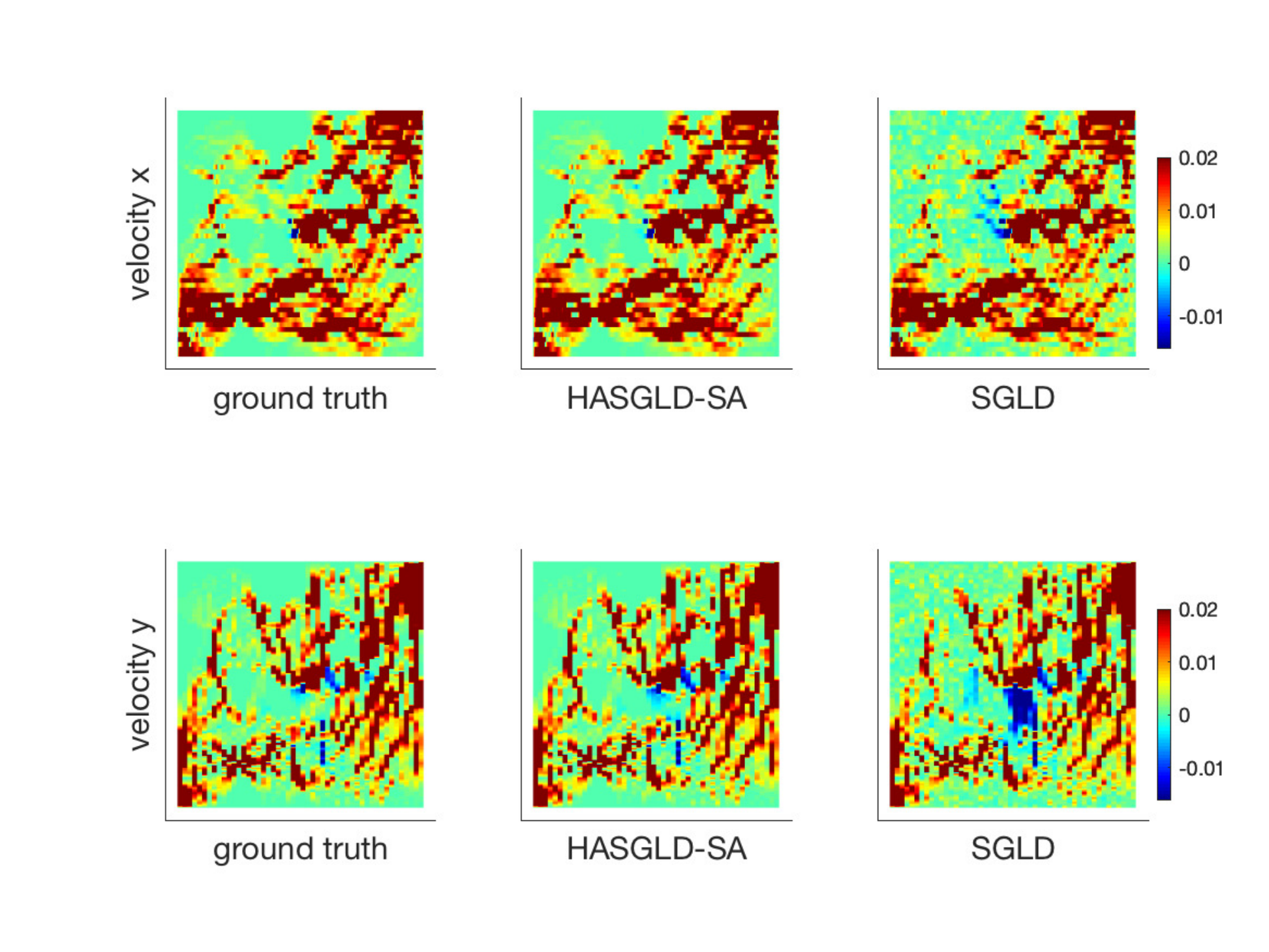}
		\caption{Test case 2}
	\end{subfigure}
\caption{Comparison of true and predicted solutions}\label{fig:ex_3-1}
\end{figure}

\subsubsection{Varying heterogeneous coefficients}

 In this section, we consider the case when heterogeneous coefficients vary and let $f = 1$ be a constant source term. The boundary conditions are $u\cdot n  = 0$ on the top and bottom sides of the square domain, $p = 1$ on the left boundary, and $p = 0$ on the right boundary.

 $\kappa$ can be obtained using Karhunen-Loeve expansion as follows:
\[
\kappa(x; \mu) = \kappa_0 +\displaystyle{\sum_{j=1}^{p}} \mu_j  \sqrt{\xi_j} \Phi_j(x)
\] 
where $ \kappa_0$ is a constant which is the mean of the random field. Moreover, random variables $\mu_j$ are drawn from i.i.d $N(0,1)$. $(\sqrt{\xi_j}, \Phi_j(x))$ are the eigen-pairs obtained from a Gaussian covariance kernel:
\begin{equation*}
\text{Cov} (x_i, y_i; x_j, y_j) = \sigma \exp(\frac{|x_i -x_j|^2}{l_x^2} -\frac{|y_i -y_j|^2}{l_y^2}  )
\end{equation*}
where we choose $[l_x, l_y]=[0.2, 0.3]$, $\sigma = 2$ and $p = 64$ in our example.


The training and testing data for deep learning can be generated by solving the equations with MFEM for various permeability fields. An illustrations of the permeability fields for $p=32, 64, 128$ and corresponding their corresponding solutions are presented in \ref{fig:kle_sol}. We can see that when $p$ becomes larger, the velocity solutions exhibit many more scale features. 

\begin{figure}[!hbt]
			\begin{subfigure}[t]{1.0\textwidth}
		\centering
		\includegraphics[width=.30\textwidth]{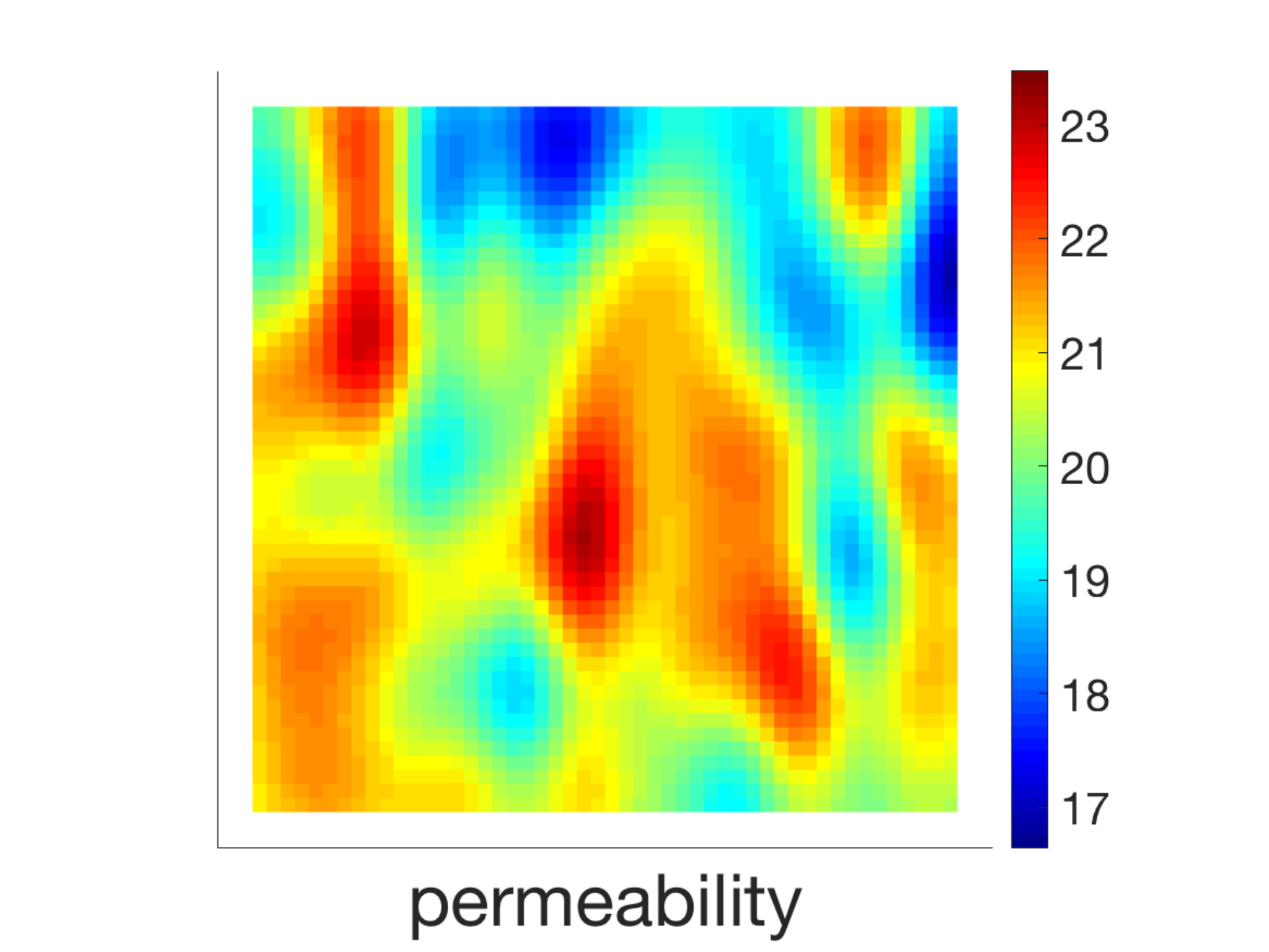}\;\;
		\includegraphics[width=.30\textwidth]{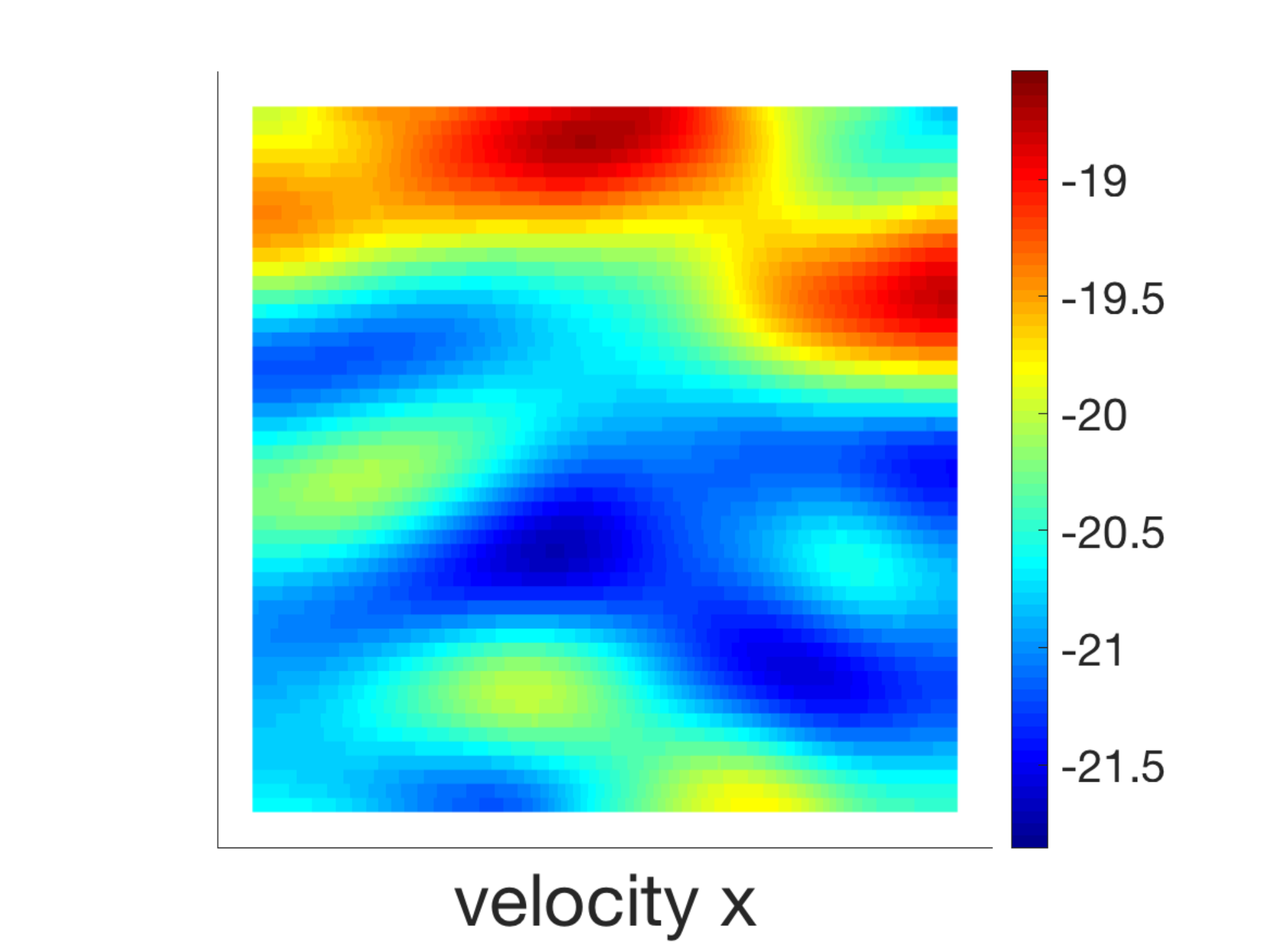}\;\;	
		\includegraphics[width=.30\textwidth]{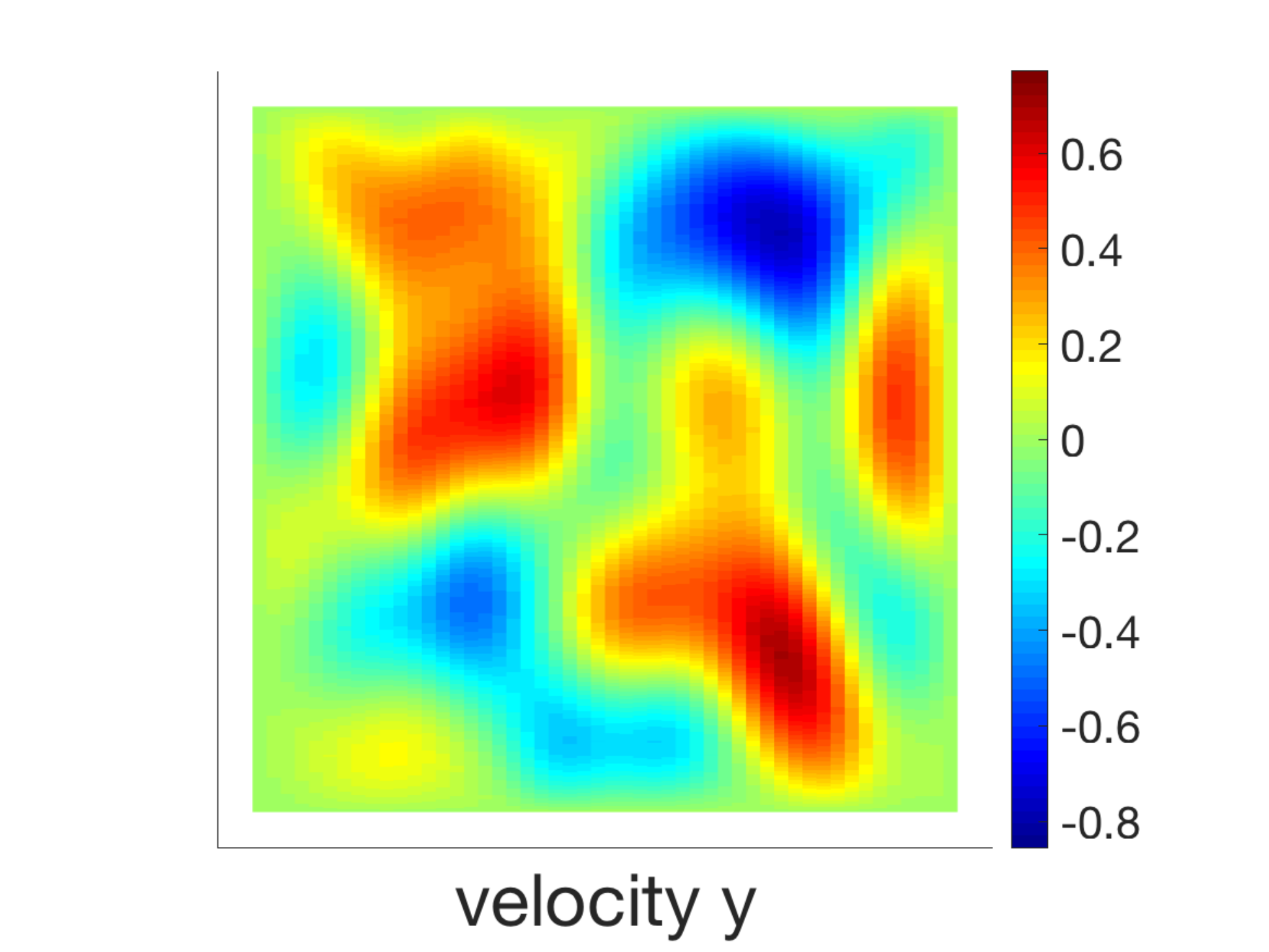}
	\end{subfigure}
	\caption{Illustrations of the permeability fields when using $64$ terms in KLE expansion and corresponding solutions. From left to right: Permeability, horizontal velocity magnitude, and vertical velocity magnitude.}\label{fig:kle_sol}
\end{figure}

We generate $1,500$ samples pairs $(\kappa_i, u_h^i)$, and randomly pick $1,300$ of them for training, and take the rest for testing. The size of an input permeability is $50\times 50$, an output velocity solution vector is $5,100$. The network consists of 2 convolution layers with kernel size $3\times 3$, and $64$ and $32$ channels, respectively. Then, an average pooling layer with pool size $2\times 2$ is followed by a flatten layer and then a dense layer with $100$ neurons. This part of the network can be viewed as an encoder. Then, a reshaping layer, another two convolution layers, a flatten layer, and a fully connected layer with $800$ neurons are used to mimic the coarse grid solver. Finally, a fully connected layer is used as a decoder. The total number of parameters is $8$,$252$, and $320$.

The numerical results using SGLD and HASGLD-SA are presented in Table \ref{tab:ex_3-2}. As an illustration, predictions of two samples are presented in Figure \ref{fig:ex_3-2}. The predictions obtained from vanilla SGLD are not reliable, and HASGLD-SA produces much better results.


\begin{table}[!htb]
	\centering
	\begin{tabular}{|c  |c  | c  | c | c |}
		\hline
	
		&\multirow{2}{*}{SGLD}  &HASGLD-SA &HASGLD-SA \\ 
		  & & (M=1) & (M=2) \\  \hline 
		No pruning &3.07 &2.72&1.68  \\  \hline
		Pruning Sparse rate 30\%  &3.04 &0.85 &0.78   \\  \hline  
		Pruning Sparse rate 50\% &3.06 &1.42 & 1.21 \\  \hline 
	\end{tabular}
	\caption{Mean errors among $300$ testing samples between the true and predicted solutions using proposed HASGLD-SA and SGLD.  }\label{tab:ex_3-2}
\end{table}

	\begin{figure}[!hbt]
	
	\begin{subfigure}[t]{1.0\textwidth}
		\centering
		\includegraphics[width=0.9\textwidth]{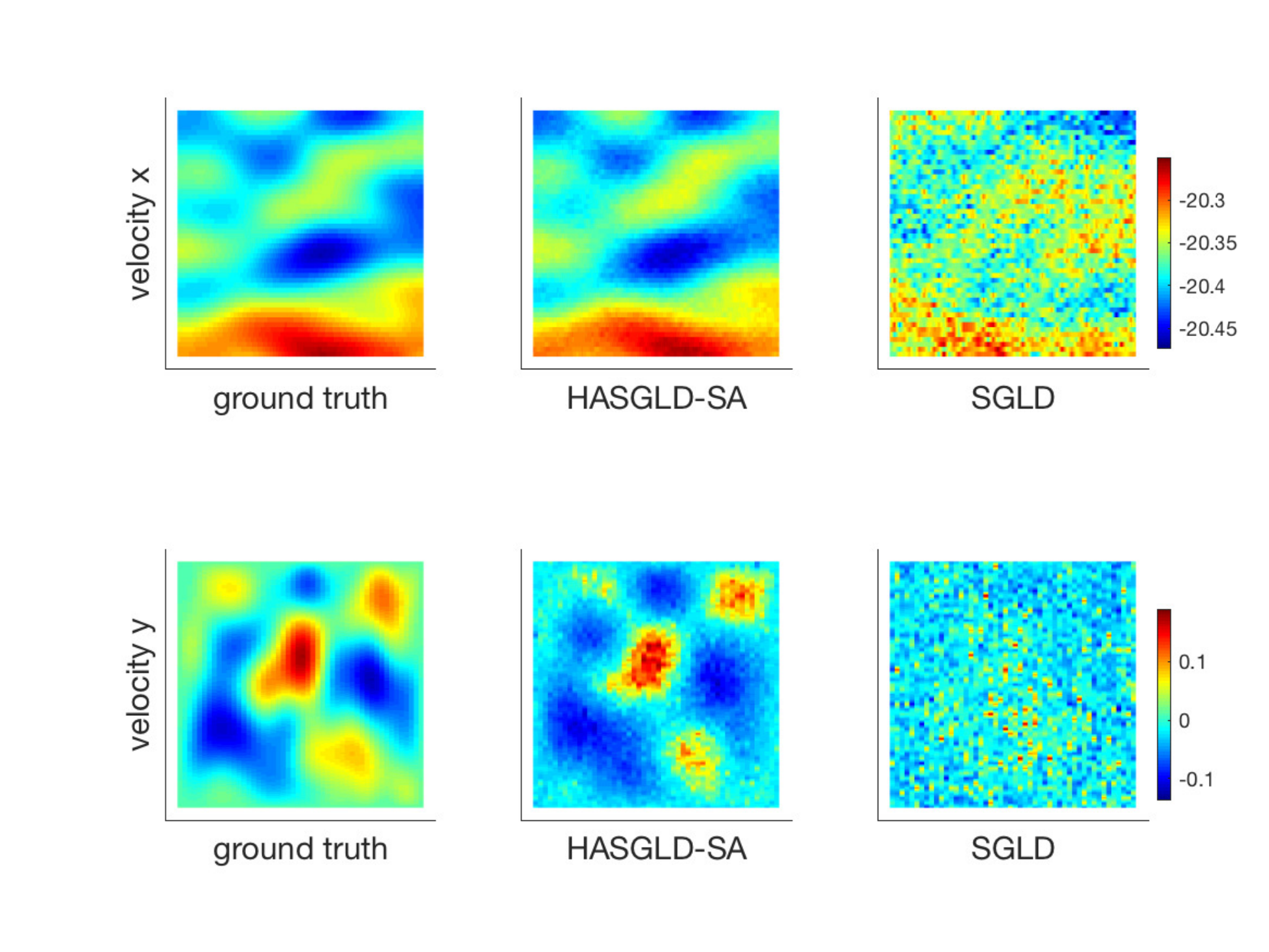}
		\caption{Test case 1}
	\end{subfigure}
	\begin{subfigure}[t]{1.0\textwidth}
		\centering
		\includegraphics[width=0.9\textwidth]{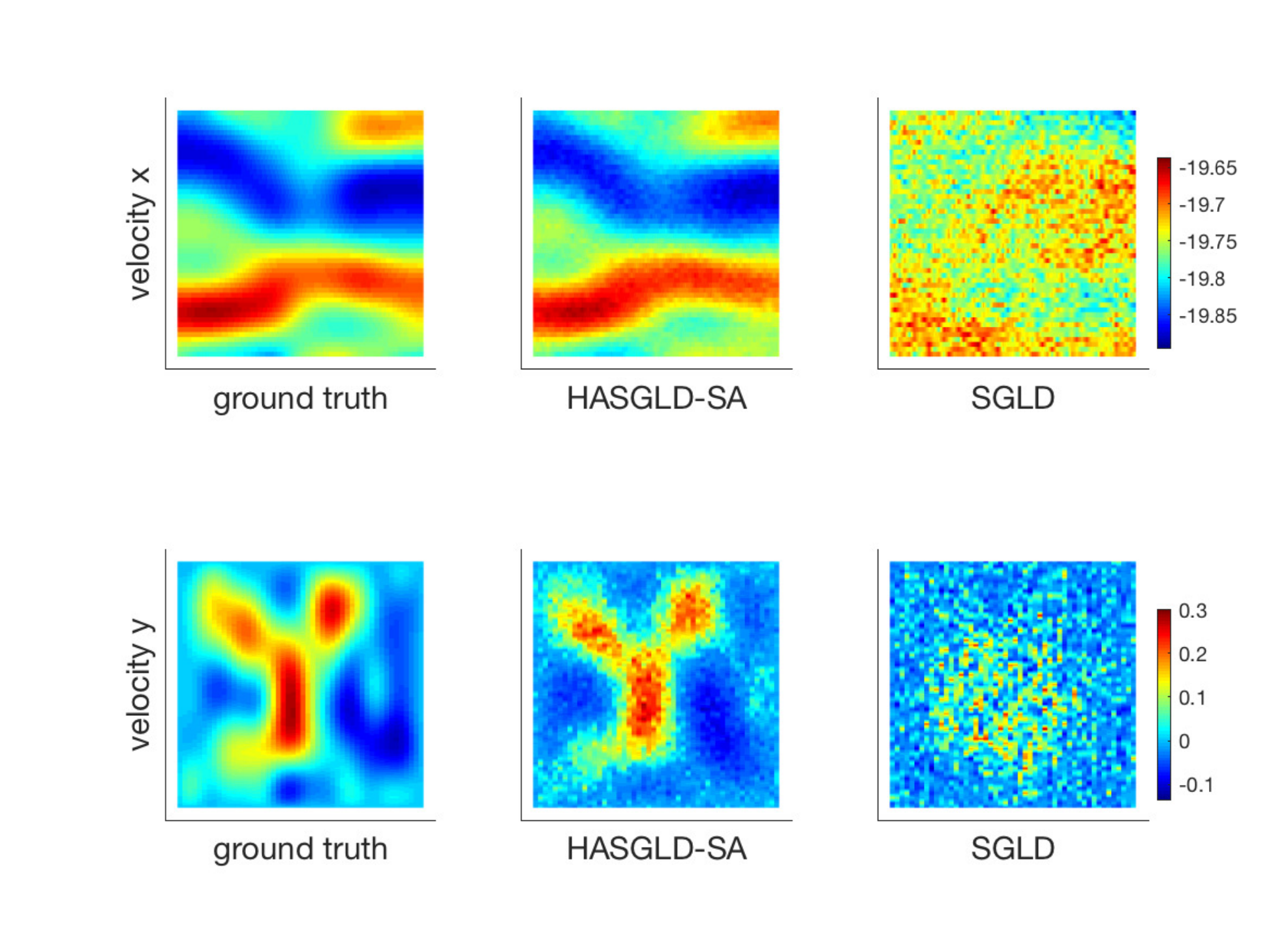}
		\caption{Test case 2}
	\end{subfigure}

	\caption{Comparison of true and predicted solutions}\label{fig:ex_3-2}
\end{figure}

\section{Conclusion} \label{sec:conclusion}
In this work, we proposed an adaptive Hessian approximated stochastic gradient MCMC method where the parameters are sampled from a posterior lying on a Riemannian manifold. The preconditioning matrix contains geometric information of the underlying density function and is updated via stochastic approximation in each iteration. It includes an approximation to the inverse Hessian which can be efficiently computed using a limited memory BFGS algorithm. We provide an analysis of the convergence of the proposed method and show that there is a controllable bias introduced by stochastic approximation. The bias term is generated due to the use of mini-batch when estimating the gradients, and the memory size which is used to approximate the inverse Hessian. It is expected to decrease if the batch size and the memory size are increased and if the step size in stochastic approximation and learning rate is decreased. In practice, our proposed algorithm achieves faster convergence and provides accurate predictions. In the future, we will explore applications of our proposed method to sparse deep learning.

\section*{Acknowledgement}
We gratefully acknowledge the support from the National Science Foundation (DMS-1555072, DMS-1736364, CMMI-1634832, and CMMI-1560834), Brookhaven National Laboratory Subcontract 382247, ARO/MURI grant W911NF-15-1-0562 and Department of Energy DE-SC0021142.
\bibliographystyle{siam} 
\bibliography{references.bib}

\end{document}